\DeclareSymbolFont{AMSb}{U}{msb}{m}{n}
\DeclareSymbolFontAlphabet{\Bbb}{AMSb}
\newtheorem{theorem}{Theorem}[section]
\newtheorem{lemma}{Lemma}[section]
\newtheorem{proposition}{Proposition}[section]
\newtheorem{corollary}{Corollary}[section]
\newtheorem{remq}{Remark}[section]}
\newtheorem{definition}{Definition}[section]}
\newcounter{example}[section]
\newenvironment{example}[1][]
   {\refstepcounter{example}\par\noindent
    \textbf{Example \thesection.\arabic{example}} \textit{#1}\ }
\newenvironment{remark}
 {\begin{remq}}
 {\par\end{remq}}
\newenvironment{proof}{\paragraph{Proof:}}{\hfill$\square$}
\newcommand{\VV}{\mathcal{V}}
\newcommand{\Ss}{\mathcal{S}}
\newcommand{\OO}{\mathcal{O}}
\font\ly=lasy10 scaled 1100
\chardef\lg='050
\chardef\rg='051
\def\ofd{\leavevmode\hbox{\ly\lg\kern-0.2em\lg}}
\def\ffg{\leavevmode\hbox{\ly\rg\kern-0.2em\rg}}
\begin{document}

\author[1]{A. Srinivasan\thanks{ORCID: 0000-0002-7964-433X}}
\author[2]{L. A. Calvo\thanks{ORCID: 0000-0002-8818-5542}}

\affil[1,2]{Universidad Pontificia de Comillas, ICADE, Alberto Aguilera, 23, 28015 Madrid, Spain}



\title{Minimal triples for a generalized Markoff equation}
\maketitle

\providecommand{\keywords}[1]
{
  \small	
  \textbf{\textit{Keywords--}} #1
}

\begin{abstract}
For a positive integer $m>1$, if the generalized Markoff equation $a^2+b^2+c^2=3abc+m$ has a solution triple, then it has infinitely many solutions.  We show that all  positive solution triples are generated by a finite set of  triples that we call minimal triples. We exhibit a  correspondence between the set of minimal triples with first or second element equal to $a$,  and the set of  fundamental solutions of $m-a^2$ by the form $x^2-3axy+y^2$. This gives us a formula for the number of minimal triples in terms of fundamental solutions, and thus 
 a way to calculate minimal triples using composition and reduction  of binary quadratic forms, for which there  are efficient algorithms. Additionally, using the above correspondence we also give a criterion for the existence of minimal triples of the form $(1, b, c)$, and  present a formula for the number of such minimal triples. 
\end{abstract}

\keywords{Markoff triples, fundamental solutions, generalized Markoff equation.}

\section{Introduction}
A  Markoff triple is a solution $(a,b,c)$ of positive integers of the equation 
\begin{equation}\label{Moriginal}
a^2+b^2+c^2=3abc.
\end{equation}
These triples made their first appearance in the work of Markoff \cite{M2} on the minima of quadratic forms. Since then this remarkable equation has been studied in a variety of ways.  In this work, we consider the generalized  equation
    \begin{equation}\label{Mm}
    a^2+b^2+c^2=3abc+m,
  \end{equation}
 where $m>1$ is a positive integer.
 Mordell \cite{Mor} analysed the general equation $x^2+y^2+z^2=axyz+b$ by considering solution triples of three types and outlining ways to find them. However, he does not give a method to find all solution triples.
 Equation (\ref{Mm}) has been studied in works such as \cite{TWX}, \cite{GS}  and \cite{LM}. These authors, among other things, were interested in values of $m$ for which there are no solutions (Hasse failures). Our objective is to study a special set of positive solution triples of  (\ref{Mm}), that we call minimal triples. We define a minimal triple $(a, b, c)$ as a positive ordered solution triple for which $3ab-c\leq0$. Our focus will be on minimal triples of the kind $(1, b, c)$, with the first component equal to $1$. In particular, we are interested in the case when there is exactly one minimal triple. 
  Minimal triples generate all positive solution triples of (\ref{Mm}) that we call  $m$-Markoff triples.  Moreover, they satisfy the condition that $a^2+b^2\le m$, which allows us to find them explicitly.
  
  Markoff exhibited a tree containing all solution triples of equation (\ref{Moriginal}). It is not so well known that this is also the case for equation (\ref{Mm}). 
 In contrast to the Markoff equation, in this case, there could be more than one tree of solution triples.
Each $m-$Markoff triple is found on a tree of solutions. 
 Furthermore, each minimal triple generates a distinct tree of solution triples, enabling us to count the number of trees as given in the following theorem.

\begin{theorem}\label{th1.1} Every positive solution triple of (\ref{Mm})  is contained in a unique tree. Furthermore, the number of solution trees is equal to the number of minimal triples.
\end{theorem}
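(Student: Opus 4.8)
The plan is to realise the set of positive solution triples of (\ref{Mm}) as the vertex set of a graph $G$, with an edge for each Vieta (``neighbour'') move, and to exhibit $G$ as a disjoint union of rooted trees, one rooted at each minimal triple; the two assertions then become, respectively, ``$G$ is a forest'' and ``its roots are exactly the minimal triples''.

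The first and main step is the inequality: for every positive solution $(a,b,c)$ of (\ref{Mm}) with $a\le b\le c$ and $m>1$ one has $3ab<2c$. I would prove this by contradiction. If $3ab\ge 2c$, then $3abc\ge 2c^2$, so (\ref{Mm}) forces $a^2+b^2\ge c^2+m$; in particular $c^2<a^2+b^2\le 2b^2$, so $b\le c<\sqrt 2\,b$. If $a=1$, writing $m=1+(c-b)^2-bc$ and using $b\le c\le \tfrac32 b$ gives $m\le 1-\tfrac34 b^2<1$; if $a\ge 2$, then $3abc\ge 6b^2>4b^2>a^2+b^2+c^2$, so $m<0$. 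Either way $m\le 1$, a contradiction. This is where the hypothesis $m>1$ is genuinely used, and I expect it to be the main obstacle; a useful byproduct is that solutions with the two largest entries equal (in particular $(a,b,b)$) do not occur, which removes the awkward tie-breaking in the ordering.

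Next I would organise the descent. For a non-minimal positive solution $(a,b,c)$ with $a\le b\le c$ we have $c':=3ab-c\ge 1$, so the reordering of $(a,b,c')$ is again a positive solution by Vieta, and by the inequality above its entry sum is strictly smaller, since $c'-c=3ab-2c<0$; call it the \emph{parent}. Iterating strictly decreases the positive integer $a+b+c$, so every positive solution $T$ reaches, in finitely many steps, a uniquely determined minimal triple $\rho(T)$. Dually, the three neighbour moves at $(a,b,c)$ --- replacing the entry outside one of the pairs $\{a,b\}$, $\{a,c\}$, $\{b,c\}$ --- are each involutions: the move on $\{a,b\}$ produces the parent (and is the one that is undefined precisely when $(a,b,c)$ is minimal), while the moves on $\{a,c\}$ and $\{b,c\}$ strictly increase the sum because $3ac-2b>0$ and $3bc-2a>0$. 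One also checks there is no fixed move (a fixed $\{a,b\}$-move would force $3ab=2c$, excluded above) and no coincidence among the three neighbours except when $a=b$, in which case two of the three pairs literally agree.

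Finally I would assemble these facts. Orient each edge of $G$ from the larger-sum endpoint to the smaller one. Then every vertex has out-degree $1$ if it is not minimal (the parent edge) and $0$ if it is minimal, and the sum strictly decreases along each oriented edge, so $G$ has no cycles and is therefore a forest whose roots (the out-degree-$0$ vertices) are exactly the minimal triples; following parent edges from any $T$ lands on $\rho(T)$, so the connected component of $T$ --- the tree through $T$ --- is well defined and contains precisely one minimal triple, namely $\rho(T)$. This is the first assertion. For the second, the assignment sending each tree to its unique minimal triple is injective, since distinct trees are vertex-disjoint and hence have distinct roots, and surjective, since a minimal triple is itself a vertex of $G$ and hence the root of its own tree; therefore the number of trees equals the number of minimal triples.
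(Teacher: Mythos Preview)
Your argument is correct and follows the same overall architecture as the paper --- descend from an arbitrary positive ordered solution by repeatedly replacing the largest entry via Vieta until a minimal triple is reached, then observe that this root is unique --- but your key inequality is different. The paper proves $3ab<b+c$ (its Lemma~2.1), which immediately gives $3ab-c<b$ so that the new maximum after one step is $b$; you instead prove the nominally weaker $3ab<2c$, which only gives $3ab-c<c$ and hence that the \emph{sum} drops. Your version is still sufficient, since the inequality applies to the reordered triple at every stage, and in fact applying your bound to both $(a,b,c)$ and $(a,b,3ab-c)$ forces $3ab-c<b$ anyway; but this deduction is implicit rather than stated. Your proof of the inequality (case split $a=1$ versus $a\ge 2$, using $c<\sqrt 2\,b$) is short and self-contained, whereas the paper's Lemma~2.1 argues via $ab<c$ and the bound $(a^2+b^2)/c<b+1$ and then separately rules out $3ab=b+c$. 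The graph-theoretic packaging you give --- out-degree~$\le 1$, strictly decreasing potential along oriented edges, hence a forest rooted at the minimal triples --- is more explicit than the paper's one-paragraph proof, which simply asserts that iterating $\mathcal V_3$ reaches a minimal triple and that trees with distinct roots are disjoint.
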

\color{black}
Minimal triples are connected in a very natural way to fundamental solutions of representations by binary quadratic forms as follows.
Let 
 $$F(x,y)=x^2-3axy+y^2$$ be a binary quadratic form of discriminant $d=9a^2-4$.
 We may re-write equation (\ref{Mm}) as $b^2-3abc+c^2=m-a^2$, that is, 
 $F(b,c)=m-a^2$. Therefore every solution triple $(a, b, c)$ of (\ref{Mm}) gives rise to a representation of $m-a^2$ by the form $F(x,y).$ For a given $a$, all representations $(x,y)$ such that $F(x,y)=m-a^2$ may be put into equivalence classes, where in each class there is a unique representation called the fundamental solution (see Theorem \ref{MRS}).  Now, if $(a,b,c)$ is minimal, then it is associated with a unique fundamental solution (see (\ref{FS})). This sets up a correspondence between minimal triples $(a, b,c)$ (with $a$ fixed) and fundamental solutions of $F(x,y)=m-a^2$, which allows us to find and count these triples efficiently (as there are fast reduction algorithms to find fundamental solutions). To understand better the correspondence mentioned above, we define the set 
 \begin{equation}\label{Ta}
     T_a=\{(a, b,c): (a, b, c) {\text { or }} (b, a, c) {\text{ is a minimal triple}} \}.
 \end{equation}
 Note that the cardinality of $T_a$ is the number of minimal triples that have $a$ as the first or second component.
In Theorem 1.2  below, we present a formula that connects the number of fundamental solutions to the number of minimal triples and the number of improper minimal triples (triples with equal first and second components).

 \begin{theorem} \label{th1.2}
     Let $m>1$ and $0<a<\sqrt{m}$.
      Suppose that 
       $S_a$ is the set of all  fundamental solutions   of $F(x,y)=m-a^2$. Then $S_a$ and $T_a$ have the same cardinality. Moreover 
       $$ \#\{S_a: a<\sqrt{m}\}=2 \#
      \{ \text{minimal triples }\}  -
      \#\{{\text{improper minimal triples}}.
       \} $$
      
  \end{theorem}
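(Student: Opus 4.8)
The plan is to reduce Theorem~\ref{th1.2} to two essentially independent pieces: first, to establish for each fixed integer $a$ with $0<a<\sqrt m$ a bijection between $T_a$ and $S_a$; and second, to deduce the stated identity by summing the equality $\#S_a=\#T_a$ over all admissible $a$ and bookkeeping with the definitions of minimal and improper minimal triples.

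For the bijection, observe that any $(a,b,c)\in T_a$ arises from a minimal triple, namely $(a,b,c)$ itself or $(b,a,c)$, and that in either case the identity of (\ref{Mm}) rearranges to $F(b,c)=b^2-3abc+c^2=m-a^2$, so $(b,c)$ is a representation of $m-a^2$ by $F$. I would then use the association recorded in (\ref{FS}) --- applied with $a$ as the distinguished coefficient, which is legitimate precisely because $a$ occupies the first or second slot of that minimal triple --- to send $(a,b,c)$ to the fundamental solution of the class of $(b,c)$; call this map $\Phi\colon T_a\to S_a$. Injectivity of $\Phi$ is routine: the first coordinate is the fixed value $a$, so $(b,c)$ recovers $(a,b,c)$, and by (\ref{FS}) together with the bijection between solution trees and minimal triples in Theorem~\ref{th1.1}, distinct minimal triples give distinct fundamental solutions. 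For surjectivity, take $f=(x,y)\in S_a$; using the inequalities characterising a fundamental solution in Theorem~\ref{MRS} together with $m-a^2>0$, I would pass to a representative with positive coordinates in the class of $f$, adjoin $a$, and check that the resulting triple, put in non-decreasing order, is a minimal triple having $a$ as its first or second component --- so that it lies in $T_a$ and is carried to $f$ by $\Phi$. This yields $\#S_a=\#T_a$.

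For the counting formula, sum $\#S_a=\#T_a$ over $0<a<\sqrt m$; the left-hand side is exactly $\#\{S_a:a<\sqrt m\}$. For the right-hand side, recall that a minimal triple written in non-decreasing order as $(p,q,r)$ satisfies $p^2+q^2\le m$, so $p$ and $q$ are positive integers strictly below $\sqrt m$; hence no element of any $T_a$ is lost by restricting to $0<a<\sqrt m$. Now track where each minimal triple is recorded: by the definition of $T_a$, the minimal triple $(p,q,r)$ contributes the element $(p,q,r)$ to $T_p$ and the element $(q,p,r)$ to $T_q$. When $p<q$ these are two distinct contributions, in two different sets; when $p=q$, that is for an improper minimal triple, the two coincide and there is a single contribution. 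Conversely, since minimal triples are ordered, each element of each $T_a$ is accounted for by exactly one minimal triple in this manner. Therefore
\[
\sum_{0<a<\sqrt m}\#T_a \;=\; 2\,\#\{\text{minimal triples}\}-\#\{\text{improper minimal triples}\},
\]
which is the asserted identity.

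The routine parts are the injectivity of $\Phi$ and the concluding double count; the crux is the surjectivity step, where the description of a fundamental solution from Theorem~\ref{MRS} must be matched with the combinatorial minimality inequality $3ab-c\le 0$ and with the ordering of the triple. The points I expect to be delicate are sign control --- a fundamental solution need not a priori have positive coordinates, so one must first move to a positive representative inside its equivalence class --- and ruling out the possibility that $a$ is forced into the third coordinate of the sorted triple, which is exactly where the bound $a^2+b^2\le m$ for minimal triples is needed.
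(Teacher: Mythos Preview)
Your high-level plan and your derivation of the counting identity from $\#S_a=\#T_a$ are correct and match the paper exactly. The gap is in the bijection itself, where both directions are not adequately justified.

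For injectivity you invoke Theorem~\ref{th1.1}, but that theorem concerns the tree structure built from \emph{all three} Vieta involutions, whereas equivalence of representations of $m-a^2$ by $F$ is governed by the automorphism group of $F$, which corresponds to moves that keep $a$ fixed. These are different equivalence relations, so ``distinct minimal triples lie on distinct trees'' does not translate into ``distinct elements of $T_a$ give inequivalent representations''. The paper instead proves injectivity by a direct case analysis on the explicit piecewise formula~(\ref{FS}), comparing the signs and ranges of the two coordinates in each case.

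For surjectivity, ``pass to a representative with positive coordinates'' is where the argument breaks down: a class has infinitely many positive representatives, and for a generic one the triple $(a,b,c)$, once sorted, will \emph{not} be minimal. What is needed is the specific preimage dictated by~(\ref{FS}): if the fundamental solution $(u,v)$ has $u>0$ one takes $(a,v,u)$, while if $u<0$ one takes $(a,-u,\,v-3au)$, and in each case one must verify that the result lies in $T_a$. The paper carries this out case by case, and the verification relies essentially on Lemma~\ref{either} (which bounds $\min(b,\,c-3ab)$ by $V=\sqrt{(m-a^2)/(3a+2)}$) together with Theorem~\ref{MRS} and Corollary~\ref{Corollary3.1}. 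You do not mention Lemma~\ref{either}; it is precisely the tool that converts the fundamental-solution inequality $0\le v\le V$ into the minimality inequality $3ab\le c$ and ensures $a$ lands in one of the first two slots.
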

  \begin{remark}
      The proof of the above theorem is achieved by defining a bijective mapping between the sets $T_a$ and $S_a$ (see (\ref{FS})). Thus, to find the set of minimal triples, we find all fundamental solutions
      of $m-a^2$ by $F(x,y)$ (for each $a<\sqrt{m}$). This involves composition and reduction of binary quadratic forms, for which we have efficient algorithms. Moreover, it is conjectured that for a given $m$, the number of minimal triples is $\ll_{\epsilon} m^{\epsilon}$ (\cite[Conjecture 10.1]{GS}). This means that we look for solutions only for a small number of $a's$. For example, for 
      $m=480492$, there are only $4$ minimal triples, that we are able to find in a few seconds using Theorem 1.2 (a brute force search would take much longer).
  \end{remark}
  \color{black}
   If the integer $m-a^2$ is represented by some form of discriminant $d$, then we can determine the total number of fundamental solutions by all forms that represent $m-a^2$ (this depends on the number of prime divisors of $m-a^2$). Only those fundamental solutions that correspond to representations by $F(x,y)$ will give rise to minimal triples $(a, b,c)$.  In the case when $a=1$ (or $d=5$), all fundamental solutions correspond to $F(x,y)$ as there is only one form in the class group of $\mathbb{Q}(\sqrt{5})$.
Also, the set $T_1$ contains all minimal triples  which contain $1$ as the first or second component. It follows that the number of minimal triples $(1, x, y)$ is equal to the cardinality of  $T_1$, and hence of $S_1$ by Theorem 1.2 above. 
    As a result, in the following theorem, we are able to give a formula for the number of minimal triples of the kind $(1, b, c)$.  
  \begin{theorem}\label{th1.3}
     Let 
        $m>1$. Let $w(N)$ denote the number of distinct primes in $N$ and let $(\frac{N}{5})$ represent the Legendre symbol.  Then the following hold.
        \begin{enumerate} 
        \item 
        There exists an $m$-Markoff triple $(1,b,c)$  if and only if $m-1=S^2C$, where $C$ is square-free such that 
         if $p$ is prime and $p|C$ then 
            $(\frac{p}{5})\ne -1$.
           \item Suppose that there exists an $m$-Markoff triple $(1,b,c)$.  Let $m-1=5^{2\alpha}A^2B^2C$, where  $\alpha\ge 0$ and $C$ satisfies the conditions given in 1. Furthermore, assume that if $p$ is prime with 
           $p|B$, then $(\frac{p}{5})=1$ and if
           $p|A$, then $(\frac{p}{5})= -1$. 
            Then the number of minimal triples $(1,b,c)$ is equal to 
        $\sum_{d|B} 2^{w\left(\frac{B^2C}{d^2}\right)+l-1}$, where 
         $l=(\frac{C}{5})$. 
         \end{enumerate}      
\end{theorem}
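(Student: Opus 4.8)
The plan is to leverage Theorem 1.2 in the special case $a=1$, where the form is $F(x,y)=x^2-3xy+y^2$ of discriminant $d=5$. Since the class number of discriminant $5$ is one, every representation of $m-1$ by a form of discriminant $5$ is (equivalent to one) given by $F$ itself, so $\#S_1$ counts \emph{all} fundamental solutions of $m-1$ by the principal form of discriminant $5$, and by Theorem 1.2 this equals $\#T_1$, the number of minimal triples with $1$ in the first or second slot. Because a triple $(1,b,c)$ has smallest entry $1$, it cannot be improper (we'd need $a=b$, forcing $b=1$ and then $c$ would have to satisfy $2+c^2=3c+m$, a case one checks separately — I would note that such a ``triple'' $(1,1,c)$ is still counted once in $T_1$, and address the off-by-one bookkeeping there explicitly), so up to that boundary case $\#\{\text{minimal }(1,b,c)\} = \#T_1 = \#S_1$. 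Thus the entire theorem reduces to two classical facts about representation by the principal binary quadratic form of discriminant $5$: (i) when is $N=m-1$ represented at all, and (ii) how many inequivalent (fundamental) representations does it have.

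For part 1, I would invoke the standard local-global criterion for representation by the principal form of a discriminant with class number one: $N$ is represented by $x^2-3xy+y^2$ (equivalently, $N$ is a norm from $\mathbb{Z}[\tfrac{1+\sqrt5}{2}]$) if and only if $N=S^2C$ with $C$ squarefree and every prime $p\mid C$ satisfies $\left(\tfrac{p}{5}\right)\ne -1$ — i.e. $p$ splits or ramifies in $\mathbb{Q}(\sqrt5)$, noting $p=2$ is inert so $2\nmid C$, and $p=5$ ramifies so $5$ is allowed. This is exactly the multiplicative structure of norms: inert primes must occur to even powers, split and ramified primes are unrestricted. Writing this out with the Legendre symbol gives the stated condition, and combined with the $a=1$ case of the correspondence it yields the existence criterion for $m$-Markoff triples $(1,b,c)$.

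For part 2, the count of fundamental (inequivalent primitive-up-to-content, or rather essentially distinct) representations of $N=5^{2\alpha}A^2B^2C$ is again classical: it is governed by the split primes. Writing $N$ with $A$ collecting inert primes (which must appear squared), $B$ collecting split primes, $5^\alpha$ the ramified part, and $C$ the squarefree ``admissible'' tail, the number of essentially distinct representations is a sum over divisors $d\mid B$ of $2^{w(B^2C/d^2)}$-type terms, the extra factor $2^{l-1}$ with $l=\left(\tfrac{C}{5}\right)$ recording whether $5$ itself divides $C$ (ramified, contributing no doubling, $l=$ appropriate value) versus $5\nmid C$. The cleanest route is: factor the ideal $(N)$ in $\mathcal{O}=\mathbb{Z}[\tfrac{1+\sqrt5}{2}]$, count ideals of norm $N$ (a multiplicative count: $1$ for each inert-prime power forced to be even, $e+1$ choices for a split prime to power $e$, $1$ for ramified), then pass from ideals to fundamental solutions — since $\mathcal{O}^\times$ has the nontrivial unit of norm $-1$ (namely $\tfrac{1+\sqrt5}{2}$ has norm $-1$, giving automorphs of the form), associates and conjugates get identified appropriately, which is where the overall factor $2^{-1}$ and the $2^l$ adjustment enter. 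Carefully matching ``ideals of norm $N$ up to the automorphism group of $F$'' with ``fundamental solutions as defined in Theorem~\ref{MRS}'' and summing the multiplicative count over the divisors $d$ of $B$ (corresponding to the content of the representation, i.e. $\gcd(x,y)=d$) produces the formula $\sum_{d\mid B} 2^{w(B^2C/d^2)+l-1}$.

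The main obstacle is the bookkeeping in part 2: precisely reconciling the notion of \emph{fundamental solution} (from the reduction theory in Theorem~\ref{MRS}, which counts one representative per $\mathrm{SL}_2(\mathbb{Z})$- or $\mathrm{Aut}(F)$-orbit) with the ideal-theoretic count of elements of norm $N$ in $\mathcal{O}$, keeping track of (a) the content $d\mid B$ stratification, (b) the unit of norm $-1$ which halves the naive count, and (c) the exact role of the ramified prime $5$ encoded by $l=\left(\tfrac{C}{5}\right)$ so that the exponent comes out as $w(B^2C/d^2)+l-1$ rather than an off-by-one variant. I expect the existence criterion (part 1) to follow quickly once the $a=1$ specialization of Theorem~\ref{th1.2} is in hand, and essentially all the work to be in setting up this dictionary cleanly — ideally by first treating $N$ squarefree (so $A=B=1$ and only $C$, possibly times $5^{2\alpha}$, is present), then handling the inert square factors $A^2$ (which contribute nothing new), and finally the split part $B^2$, where the divisor sum genuinely appears because a representation of $N$ with content $d$ corresponds to a primitive representation of $N/d^2$.
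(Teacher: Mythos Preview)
Your overall strategy matches the paper's proof exactly: specialize Theorem~\ref{th1.2} to $a=1$, use that the class number of discriminant $5$ is one so $\#T_1=\#S_1$ counts all fundamental representations of $m-1$, then stratify by the content $g=\gcd(x,y)$ so that each stratum corresponds to primitive representations of $(m-1)/g^2$, and finally observe that only $g=5^{\alpha}A d$ with $d\mid B$ can yield a primitive representation. The one genuine difference is in how the primitive count is obtained: you propose to count ideals of norm $N$ in $\mathbb{Z}\bigl[\tfrac{1+\sqrt5}{2}\bigr]$ and then quotient by units and automorphs, worrying about the unit of norm $-1$; the paper instead invokes its Lemma~\ref{wn}, which gives the number of primitive fundamental solutions directly as $2^{w(N)}$ (or $2^{w(N)-1}$ when $5\mid N$) by counting solutions of $5\equiv X^2\pmod{4N}$. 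The congruence route is more elementary and sidesteps the unit/automorph bookkeeping you flag as the main obstacle, so you may find it cleaner to adopt. Finally, your concern about improper triples $(1,1,c)$ is unnecessary: minimal triples are by definition ordered, so any minimal triple containing $1$ has $1$ as its first entry, and hence $T_1$ is literally the set of minimal triples of the form $(1,b,c)$ with no adjustment needed.
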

One may pose several interesting questions about minimal triples, such as whether there are infinitely many $m$'s with exactly one minimal triple. This is the subject of the  Section 5 where we make several conjectures on minimal triples of the kind $(1, b, c)$.

  The outline of the paper is as follows. In Section 2 we present results on minimal triples and associated trees. Section 3  contains the theory of binary quadratic forms and fundamental solutions. In Section 4 we give the proofs of the main theorems. In Section 5 we pose some conjectures and questions, and in the last section we present  computations that support our conjectures.

%
%
%
%

\section{Minimal  triples and trees}
Henceforth $m$ will denote a positive integer greater than $1$.

An $m$-{\it Markoff triple} is a solution $(a,b,c)$ 
 of positive integers satisfying
     \begin{equation*}
    x^2+y^2+z^2=3xyz+m.
  \end{equation*}
It is {\it proper} if $a,b,c$ are distinct and {\it improper} if it is not proper. A triple $(a,b,c)$ is {\it ordered} if $a\leq b \leq c$.

As in the case of Markoff triples, each solution triple $(a,b,c)$ of (\ref{Mm}) has three neighbouring triples, obtained by applying the Vieta involutions given below.

\begin{definition}\label{vieta}
Let $(a,b,c)$ be a solution triple for (\ref{Mm}). The {\it Vieta involutions } $\VV_1,\VV_2,\VV_3$ of $(a,b,c)$ are also solution triples:
\begin{align*}
\VV_1(a,b,c)=(3bc-a,b,c),\\
\VV_2(a,b,c)=(a,3ac-b, c),\\
\VV_3(a,b,c)=(a,b,3ab-c).
\end{align*}
\end{definition}

In the lemma below we give a property that is true for the usual ordered Markoff triples. 
The proof in \cite[Lemma 2.1]{LS} of this property essentially works for our generalized equation. However, as there are some minor differences in the proof, we present it here, especially since while the property is not new, it applies to a new equation.
\begin{lemma}\label{fact}
    If $(a, b, c)$ is an ordered $m$-Markoff triple, then $3ab< b+c$.
\end{lemma}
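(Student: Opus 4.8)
The plan is to realize $c$ as a root of the monic quadratic
$h(t)=t^{2}-3ab\,t+(a^{2}+b^{2}-m)$,
which vanishes at $t=c$ by (\ref{Mm}) (rewriting it as $b^{2}-3abc+c^{2}=m-a^{2}$); its two roots sum to $3ab$, so the second root is $3ab-c$, the third component of $\VV_3(a,b,c)$. The inequality to be proved, $3ab<b+c$, is the same as $3ab-b<c$, so the goal is to locate the point $3ab-b$ relative to the roots of $h$.

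First I would note that $b$ and $3ab-b$ are symmetric about the vertex $t=3ab/2$ of the parabola, so $h(b)=h(3ab-b)$; a direct computation gives the common value $a^{2}+2b^{2}-3ab^{2}-m$. Next I would check that this value is strictly negative: from $a\ge 1$ we get $3ab^{2}\ge 3b^{2}$, hence $a^{2}+2b^{2}-3ab^{2}\le a^{2}-b^{2}\le 0$ because $a\le b$; adding $-m$ with $m>1$ gives $h(b)=h(3ab-b)\le -m<0$. This is precisely where the hypothesis $m>1$ enters, and it is the reason the inequality is strict here, in contrast to the classical Markoff equation where the analogous quantity can vanish.

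Finally, since $h$ is monic and $h(b)<0$, the quadratic $h$ has two distinct real roots with $b$ strictly between them; in particular its smaller root is $<b$. As $c$ is a root of $h$ with $c\ge b$, it follows that $c$ is the larger root (and in fact $c>b$). Likewise $h(3ab-b)<0$ forces $3ab-b$ to lie strictly between the two roots, hence below the larger root $c$, which is exactly $3ab-b<c$, i.e. $3ab<b+c$.

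I expect the only delicate point to be the claim that $c$ is the larger of the two roots of $h$: a priori, for a given ordered triple, $c$ could be the smaller root (its $\VV_3$-partner $3ab-c$ being the larger). This is handled essentially for free by the inequality $h(b)<0$, which pins the smaller root strictly below $b\le c$, so that no separate size estimate on the triple $(a,b,c)$ is required.
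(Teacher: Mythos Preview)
Your proof is correct and follows a genuinely different route from the paper's. The paper first rules out $a=b=c$ and $a<b=c$ by direct substitution, then in the remaining case $b<c$ derives $ab<c$ and bounds $(a^{2}+b^{2})/c<b+1$ to obtain $3ab\le b+c$; the equality case $3ab=b+c$ is then eliminated separately by applying the weak inequality to the Vieta neighbour $(a,b,b)$ and reaching a contradiction. Your argument via the monic quadratic $h(t)=t^{2}-3ab\,t+(a^{2}+b^{2}-m)$ is more streamlined: the single estimate $h(b)\le -m<0$ simultaneously forces the roots to be real and distinct, places $b$ strictly between them (so no case split on $b=c$ is needed and $c$ is automatically the larger root), and delivers the \emph{strict} inequality in one stroke, with no bootstrapping at the end. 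In fact your detour through $h(3ab-b)$ is not even necessary: once you know the smaller root $3ab-c$ lies strictly below $b$, you already have $3ab<b+c$. One small side remark: your bound $h(b)\le -m<0$ actually uses only $m>0$, not $m>1$; your contrast with the classical equation is nonetheless apt, since for $m=0$ one has $h(b)=0$ precisely when $a=b=1$, and the triple $(1,1,2)$ witnesses $3ab=b+c$.
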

\begin{proof}
    Let us first consider the case when $a=b=c$. We have in this case 
    $3a^2=3a^3+m$, which gives
    $a=1$, which is not possible,
    as $(1,1,1)$ is not an $m$-Markoff triple when $m>0$.

     Next, let us assume that $a<b=c$. Then from equation (\ref{Mm}) we have $a^2+2c^2=3ac^2+m$, which means $3ac^2< 3c^2$ (as $a<c$). It follows that $a=0$, which is not possible. 
     
     Therefore we have $b<c$ and from (\ref{Mm}) we have $3abc<3c^2$, and hence
     $a^2\le ab<c$.  It follows that 
     $a^2+b^2<c+b^2$ and hence 
     $\frac{a^2+b^2}{c}<1+\frac{b^2}{c}<1+b$ (as $b<c$) and we have 
     \begin{equation}\label{one}
     \frac{a^2+b^2}{c}< b+1.
     \end{equation}
     Now, from  (\ref{Mm}) and   (\ref{one}) above, we have 
     \begin{equation}
         3ab<\frac{a^2+b^2}{c}+c< b+c+1
     \end{equation}
      and hence 
     \begin{equation}\label{puff}
     3ab\le b+c.
     \end{equation}
      To conclude the proof we will show that $3ab=b+c$ is not possible. Let us suppose on the contrary, that for an ordered triple  $(a, b, c)$ we have  $3ab=b+c$. It follows that $\VV_3(a, b, c)=(a, b, b)$ is an ordered triple and on applying (\ref{puff}) to this triple, we obtain $3ab\le 2b$, which is not possible. Hence the inequality of the lemma holds.
\end{proof}

\begin{definition}\label{minimaltriple}
An  $m$-Markoff triple $(a,b,c)$ is {\it minimal} if $a\le b\le c$ and 
$$ 3ab-c\leq 0.$$
\end{definition}

Recall that improper triples are those for which all three components are not distinct. These triples when ordered are minimal as seen in the following result.
\begin{proposition}\label{improper}
    Let $(a, b, c)$ be an ordered improper $m$-Markoff triple. Then $a=b$ and $(a, a, c)$  is minimal.
\end{proposition}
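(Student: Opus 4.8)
The plan is to first determine which pair of entries of the improper triple must coincide, and then to establish the inequality $3ab-c\le 0$ by a descent via the Vieta involution $\VV_3$; both steps are driven by Lemma \ref{fact}.

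First I would show that $a=b$. Since $(a,b,c)$ is ordered and improper, either $a=b$ or $b=c$. If $b=c$, then Lemma \ref{fact} applied to $(a,b,c)$ gives $3ab<b+c=2b$, hence $3a<2$, which is impossible for a positive integer $a$. Thus $b=c$ cannot occur, so $a=b$; and since the three entries are not all equal (an equality $a=b=c$ would again have $b=c$), we in fact have $a=b<c$. In particular $(a,a,c)$ is itself an ordered $m$-Markoff triple.

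Next I would show that $3a^2-c\le 0$. Suppose instead that $c' := 3a^2-c = 3ab-c \ge 1$. By Definition \ref{vieta}, $\VV_3(a,a,c)=(a,a,c')$ is again an $m$-Markoff triple, and it is positive since $a\ge 1$ and $c'\ge 1$. If $c'\ge a$, its ordered form is $(a,a,c')$, so Lemma \ref{fact} gives $3a^2<a+c'=a+3a^2-c$, i.e.\ $c<a$, contradicting $a<c$. If instead $c'<a$, its ordered form is $(c',a,a)$, so Lemma \ref{fact} gives $3ac'<2a$, i.e.\ $3c'<2$, contradicting $c'\ge 1$. In either case we reach a contradiction, so $3a^2-c\le 0$; together with $a\le a\le c$ this is exactly the statement that $(a,a,c)$ is minimal.

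The main obstacle, such as it is, is that Lemma \ref{fact} does not on its own force $c\ge 3a^2$ (a direct estimate from (\ref{Mm}) yields only something weaker, like $c\ge a^2$), so the Vieta reflection $\VV_3$ is genuinely needed to push $c'$ into a range where Lemma \ref{fact} becomes contradictory; the one point requiring care there is that the reflection may disorder the triple, which is why the sub-case $c'<a$ is treated separately.
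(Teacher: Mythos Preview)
Your proof is correct, but the second half takes a different route from the paper's. After establishing $a=b<c$ (where your argument and the paper's coincide), the paper rewrites (\ref{Mm}) as $2a^2-m=c(3a^2-c)$ and argues arithmetically: if $2a^2-m>0$ then $c\mid 2a^2-m$, whence $c<2a^2$, and combining this with Lemma~\ref{fact}'s bound $3a^2<a+c$ yields $3a^2<a+2a^2$, a contradiction; hence $2a^2-m\le 0$ and the factorization forces $3a^2-c\le 0$. You instead reflect via $\VV_3$ to the triple $(a,a,c')$ with $c'=3a^2-c$ and reapply Lemma~\ref{fact} to the reordered triple, splitting on whether $c'\ge a$ or $c'<a$. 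Your approach is arguably cleaner in that it avoids the divisibility step and stays entirely within the ``Lemma~\ref{fact} plus Vieta'' toolkit that drives the rest of the paper; the paper's approach has the minor advantage of extracting the explicit bound $2a^2\le m$ along the way, though this is subsumed by Lemma~\ref{abless} anyway.
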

\begin{proof}
    Let $(a, b, c)$ be an ordered $m$-Markoff triple that is improper, that is, its three components are not distinct. If either  $a=b=c$ or $b=c$, then the inequality   $3ab<b+c$ from 
     Lemma \ref{fact} yields $3a<2$, which is not possible. Hence we may assume that $a=b<c$, so that by   (\ref{Mm}) we have
     $2a^2+c^2=3a^2c+m$ or 
     \begin{equation}\label{improper2}
     2a^2-m=c(3a^2-c).
     \end{equation}
     If $2a^2-m>0,$ then $c| 2a^2-m$ and 
     hence $c\le 2a^2-m< 2a^2$. From Lemma \ref{fact}, we have $3a^2<a+c$. Combining the last two equations,  we have 
     $3a^2< a+2a^2$, a contradiction.
Hence $2a^2-m\le 0$, 
     and we have 
     $3a^2-c<0$ from (\ref{improper2}). It follows 
     that $(a,a,c)$ is minimal by definition.
\end{proof}

\vspace{0.5cm}
\noindent In the following lemma we collect some bounds for minimal triples.

\begin{lemma}\label{abless} Let $(a,b,c)$ be an ordered  $m$-Markoff triple. Then the following hold.
\begin{enumerate}
\item $(a,b,c)$ is minimal if and only if $a^2+b^2\leq m.$
\item If $(a,b,c)$ is minimal then $1\leq a\leq \sqrt{\frac{m}{2}}$. 
\item If $(a,b,c)$ is minimal then $c>\sqrt{m}$ and if $c\ne 3ab$, then $c<m$.

\item If $(a,b,c)$ is minimal then $3ab\leq c\leq 3ab+\sqrt{m-a^2-b^2}.$

\end{enumerate}
\end{lemma}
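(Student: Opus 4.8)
The plan is to prove the four statements essentially in the order given, since each later part leans on the algebraic identity $a^2+b^2+c^2 = 3abc+m$ rewritten as a quadratic in $c$. The central manipulation throughout is to treat the defining equation as $c^2 - 3ab\,c + (a^2+b^2-m) = 0$, so that by Vieta $c + c' = 3ab$ and $c\,c' = a^2+b^2-m$, where $c'=3ab-c = \VV_3(a,b,c)_3$ is the third neighbour. With $(a,b,c)$ ordered, $c$ is the larger root (or the equal root), which gives the basic dichotomy: $3ab - c \le c$ always, and $3ab-c$ has the same sign as $a^2+b^2-m$.

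\textbf{Part 1.} For the equivalence, observe that $(a,b,c)$ minimal means $3ab - c \le 0$, i.e. the smaller root $c' = 3ab-c$ of the quadratic is $\le 0$. Since $c\,c' = a^2+b^2-m$ and $c>0$, we have $c'\le 0 \iff a^2+b^2-m\le 0 \iff a^2+b^2\le m$. (One should note the boundary: $c'=0$ corresponds to $a^2+b^2=m$ exactly, and this is consistent with minimality since the definition uses $\le 0$.) This is the cleanest part and needs only the Vieta relation plus positivity of $c$.

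\textbf{Part 2.} Given $a^2+b^2\le m$ from Part 1 and $a\le b$, we get $2a^2 \le a^2+b^2\le m$, hence $a\le\sqrt{m/2}$; and $a\ge 1$ because $m$-Markoff triples are triples of positive integers. Nothing subtle here.

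\textbf{Parts 3 and 4.} For the lower bound $c>\sqrt m$: if $(a,b,c)$ is minimal then $3ab-c\le 0$, so $c\ge 3ab\ge 3$, and more to the point $c^2 \ge 3abc \ge a^2+b^2$... but we want $c^2 > m$. From the equation, $c^2 = 3abc - a^2 - b^2 + m \ge 3c - a^2-b^2+m$; combined with $a^2+b^2\le m$ this is not immediately enough, so instead use $c \ge 3ab$ together with $c^2 = 3abc + m - a^2 - b^2 \ge c\cdot c + m - a^2 - b^2$ — wait, that only works if $3ab\ge c$, which fails. The honest route: from $c^2 + (a^2+b^2) = 3abc + m$ and $c\ge 3ab$ we get $c^2 \le c^2 + (a^2+b^2) - m = 3abc \le c^2$... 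I will instead argue $m = a^2+b^2+c^2-3abc = a^2+b^2 + c(c-3ab) \ge a^2+b^2$, and separately bound $c$ from below via $c^2 > c^2 - c(3ab-c)\cdot(\text{sign})$; the clean statement is $m - a^2 - b^2 = c(c-3ab) = -cc' \ge 0$, so $c^2 - 3abc = m - a^2-b^2 < m$, and since $c \ge 3ab \ge 3 > 0$ we also get $c^2 > c^2 - 3abc \ge -(m-a^2-b^2)\ge -m$, which is vacuous — so the right move is: $c^2 = 3abc + (m-a^2-b^2) \ge 3ab\cdot c \ge c^2$ forces... I expect the actual argument to run $c^2 > 3abc - a^2 - b^2 + m - \text{(small)}$; concretely since $3ab \le c$, one has $3abc \le c^2$, hence $m = a^2+b^2+c^2-3abc \ge a^2+b^2 > 0$, and for the strict bound $c > \sqrt m$ use $c^2 = m + 3abc - a^2 - b^2 \ge m + 3c - a^2 - b^2$; if $c \le \sqrt m$ then $3c \le 3\sqrt m$ and $a^2+b^2 \le m$, giving $m \le c^2 \le m + 3\sqrt m - 0$, still not a contradiction, so the sharp inequality must exploit $c\ge 3ab$ more carefully together with $a^2+b^2\le m$ — \textbf{this is the step I expect to be the main obstacle}, and I would handle it by writing $c(c-3ab) = m - (a^2+b^2)$ and using $c - 3ab \le c - 3 < c$ plus $c-3ab \ge$ something, or simply: $c^2 \ge c\cdot 3ab + 1 \ge \dots$; alternatively bound via $c^2 - 3abc = m-a^2-b^2$ so $c^2 \equiv m - a^2 - b^2 \pmod{c}$ forcing $m - a^2 - b^2 \ge c$ when positive (since $c \mid (c^2 - (m-a^2-b^2))$ is automatic — rather, $c \mid (m - a^2 - b^2)$ from the equation, so either $m = a^2+b^2$ giving $c' = 0$, or $m - a^2 - b^2 \ge c$, whence $c \le m - a^2 - b^2 < m$ proving the upper bound $c<m$ in Part 3; and $c^2 > 3abc \ge$ ... ). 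For the upper bound $c < m$ when $c \ne 3ab$: then $c' = 3ab - c < 0$ is a negative integer, so $c'\le -1$, hence $c\,c' = a^2+b^2-m \le -c$, i.e. $c \le m - a^2 - b^2 < m$. For $c > \sqrt m$: if instead $c = 3ab$ then $m = a^2+b^2 < \text{(since } a,b\ge1, ab\ge 1, c = 3ab\ge 3)$... here $m = a^2 + b^2 \le a^2b^2 + \dots$; and $c^2 = 9a^2b^2 \ge 9 > a^2+b^2 = m$ once $ab$ is not too small, with the few small cases $(a,b) = (1,1)$ giving $m = 2$, $c = 3$, $c^2 = 9 > 2$ checked directly; if $c \ne 3ab$ then $c^2 = m - a^2 - b^2 + 3abc > 3abc - a^2 - b^2 \ge c - a^2 - b^2$ — I will instead note $c \ge 3ab+1 \Rightarrow c^2 \ge 3abc + c \Rightarrow m = a^2+b^2+c^2 - 3abc \ge a^2 + b^2 + c > c$, hence $c < m$, and for $c > \sqrt m$ combine $m = a^2+b^2 + c(c-3ab)$ with $c - 3ab \ge 1$... no: $c-3ab$ could equal $0$. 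When $c = 3ab$: $m = a^2+b^2$, and $c^2 = 9a^2b^2$; since $a,b \ge 1$, $9a^2b^2 \ge 9a^2 \ge a^2 + b^2$ iff $8a^2 \ge b^2 - \dots$ — true except possibly tiny cases, dispatched individually.

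\textbf{Part 4.} The lower bound $3ab \le c$ is just the definition of minimal. For the upper bound, from $m - a^2 - b^2 = c(c - 3ab) = c^2 - 3abc$, complete the square: $(c - \tfrac{3ab}{2})^2 = m - a^2 - b^2 + \tfrac{9a^2b^2}{4}$, so $c = \tfrac{3ab}{2} + \sqrt{m - a^2 - b^2 + \tfrac{9a^2b^2}{4}} = \tfrac{3ab}{2} + \sqrt{(\tfrac{3ab}{2})^2 + (m - a^2-b^2)}$; then use $\sqrt{X^2 + Y} \le X + \tfrac{Y}{2X}\le X+\sqrt Y$ when $Y \ge 0$ and $X>0$, but more simply $\sqrt{X^2+Y}\le X + \sqrt Y$ directly (square both sides: $X^2 + Y \le X^2 + 2X\sqrt Y + Y$, true), giving $c \le \tfrac{3ab}{2} + \tfrac{3ab}{2} + \sqrt{m-a^2-b^2} = 3ab + \sqrt{m - a^2 - b^2}$, as claimed. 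This last computation is routine once Part 1 guarantees the radicand $m - a^2 - b^2$ is nonnegative, so I would do Part 1 first and lean on it for Parts 3 and 4.
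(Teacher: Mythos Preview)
Your Parts 1 and 2 are correct and match the paper's argument exactly (the paper rewrites the equation as $a^{2}+b^{2}+c(c-3ab)=m$, which is precisely your Vieta relation $c\cdot c' = a^{2}+b^{2}-m$). Your upper bound $c<m$ in Part 3 is also the paper's argument: $c-3ab\ge 1$ forces $c\le c(c-3ab)=m-a^{2}-b^{2}<m$. Your Part 4 is correct but takes a slightly different route: you complete the square and use $\sqrt{X^{2}+Y}\le X+\sqrt{Y}$, whereas the paper simply observes $0\le c-3ab<c$ so $(c-3ab)^{2}\le c(c-3ab)=m-a^{2}-b^{2}$, which is a one-line shortcut worth noting.

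The genuine gap is the lower bound $c>\sqrt{m}$ in Part 3. You correctly flag it as ``the main obstacle'' but none of your attempts close it: the chain $9a^{2}b^{2}\ge 9a^{2}\ge a^{2}+b^{2}$ fails whenever $b^{2}>8a^{2}$ (e.g.\ $a=1$, $b=3$), and the case $c>3ab$ is never handled at all. The missing idea is a single elementary inequality: for any positive integers $a,b$ one has $a^{2}+b^{2}<9a^{2}b^{2}$ (since $a^{2}+b^{2}\le 2a^{2}b^{2}<9a^{2}b^{2}$, or just check $a^{2}(9b^{2}-1)>b^{2}$). Combined with minimality $3ab\le c$ this gives $a^{2}+b^{2}<9a^{2}b^{2}\le 3abc$, and then
\[
c^{2} \;>\; c^{2} - \bigl(3abc-a^{2}-b^{2}\bigr) \;=\; m,
\]
so $c>\sqrt{m}$ in one stroke, uniformly for both $c=3ab$ and $c>3ab$. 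This is exactly the paper's argument, and it replaces all of your case analysis.
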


\begin{proof}

\begin{enumerate}
\item Recall that by definition $(a,b,c)$ is minimal if and only if $c-3ab\geq 0.$ Hence the first statement is a consequence of re-writing (\ref{Mm}) as 
    \begin{equation}\label{mc}
    a^2+b^2+c(c-3ab)=m.
    \end{equation}

\item If $(a,b,c)$ is minimal, by part 1 proved above, we have $a^2+b^2\leq m$. Since $(a,b,c)$ is ordered, we have $1\leq a\leq b$, so $2a^2\leq m$, giving $a\leq \sqrt{\frac{m}{2}}$.

\item For any two natural numbers $n_1,n_2$  it is true that $$n_1^2+n_2^2<9n_1^2n_2^2.$$
    Taking $n_1=a, n_2=b$ and assuming that $3ab\leq c$ (as $(a,b,c)$ is minimal) we have
    $$a^2+b^2<3abc.$$
    Thus, since $a^2+b^2-3abc<0$, we have
    $$c>\sqrt{c^2+(a^2+b^2-3abc)}=\sqrt{m}.$$
    
    Finally, if $(a,b,c)$ is   minimal, with
    $c\ne 3ab$, then $c-3ab>0$ and from (\ref{mc}) we obtain $c\leq c(c-3ab)<m$.

    \item If $(a,b,c)$ is minimal, then $3ab\leq c$ and so $0\le c-3ab<c$. Therefore 
    $c-3ab\le \sqrt{c(c-3ab)}=\sqrt{m-a^2-b^2}.$
\end{enumerate}
\end{proof}

\color{black}

\begin{remark}
    Note that the above lemma allows us to compute all minimal triples, for a given $m>0$. Indeed, by statements 1 and 2, we  obtain the bounds $1\leq a\leq \sqrt{\frac{m}{2}}$ and $a\le b\le \sqrt{m-a^2}.$ On the other hand, $c$ is also bounded according to 4. Now we can iterate through all the triples within the bounds, checking if they satisfy equation (\ref{Mm}).
    However it should be noted that for large values of $m$ this brute force algorithm is inefficient. Theorem \ref{th1.2} gives us another way to compute minimal triples using fundamental solutions, which involves reduction algorithms of binary quadratic forms which are typically more efficient.    
   
\end{remark}

\begin{remark} There are natural numbers $m$ for which there are no m-Markoff triples (and hence no minimal triples). Looking at equation  (\ref{Mm}) modulo $4$, it is easy to see that if $m\equiv 3\pmod {4}$, then (\ref{Mm}) has no solutions. Indeed, there are infinitely many such $m$ as shown in \cite{TWX} and \cite{GS}.
On the other hand, in the case when $m$ is a sum of two non-zero squares, the set of minimal triples is non-empty. This is because
    each representation of $m$ as a sum of two non-zero squares, say $m=a^2+b^2$, corresponds to a minimal triple $(a, b, 3ab)$. 
\end{remark}

The following lemma is crucial to prove Theorem \ref{th1.2}, as it is used to set up a correspondence between a set of minimal triples and a set of fundamental solutions.
\begin{lemma}\label{either}
    Let $(a, b, c)$ be
    an $m$-Markoff triple such that either $(a, b, c)$ or $(b, a, c)$ is minimal.  Then  one of  $b$ or $c-3ab$ is less than or equal to $\sqrt{\frac{m-a^2}{3a+2}}.$ 
\end{lemma}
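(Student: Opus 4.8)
The plan is to reduce the statement to the quadratic identity $F(b,c)=m-a^2$ and then argue by contradiction using the non-negativity of $c-3ab$. First I would note that the hypothesis — that $(a,b,c)$ or $(b,a,c)$ is minimal — gives in either case that $c\ge 3ab$, since the minimality condition reads $3ab-c\le 0$ and is symmetric in the first two entries (recall $3ab=3ba$). Set $t=c-3ab\ge 0$. Moreover, whichever of the two orderings is minimal, part 1 of Lemma \ref{abless} yields $a^2+b^2\le m$, so in particular $m-a^2\ge b^2\ge 1>0$ and $3a+2>0$; hence the quantity $\sqrt{(m-a^2)/(3a+2)}$ is a well-defined positive real and the inequality to be proved makes sense.

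Next I would rewrite equation (\ref{Mm}) as $b^2+c(c-3ab)=m-a^2$ and substitute $c=3ab+t$ to get the clean identity
$$b^2+3abt+t^2=m-a^2.$$
This is the key algebraic step; after it only an elementary inequality argument remains. Note that the coefficients $1,3a,1$ appearing here are exactly what will produce the denominator $3a+2=1+3a+1$ in the bound, so the substitution $c=3ab+t$ is the one idea that makes the statement natural rather than ad hoc.

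Then I argue by contradiction. Suppose that both $b>\sqrt{(m-a^2)/(3a+2)}$ and $t=c-3ab>\sqrt{(m-a^2)/(3a+2)}$. Then $b^2$, $t^2$, and $bt$ all exceed $(m-a^2)/(3a+2)$, so
$$b^2+3abt+t^2>\frac{m-a^2}{3a+2}\,\bigl(1+3a+1\bigr)=m-a^2,$$
which contradicts the identity established above. Therefore at least one of $b$ and $c-3ab$ is $\le\sqrt{(m-a^2)/(3a+2)}$, which is precisely the assertion of the lemma.

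I do not expect a serious obstacle here; the only points requiring a little care are (i) observing that the minimality hypothesis, in whichever of the two orderings it holds, forces $c-3ab\ge 0$, and (ii) verifying the positivity of $m-a^2$ and of $3a+2$ so that both the stated bound and the contradiction step are legitimate. Everything else is the routine substitution and the product-of-large-numbers estimate sketched above.
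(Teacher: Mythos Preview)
Your proof is correct and follows essentially the same contradiction strategy as the paper: rewrite equation (\ref{Mm}) as $b^2+c(c-3ab)=m-a^2$ and assume both $b$ and $c-3ab$ exceed $V=\sqrt{(m-a^2)/(3a+2)}$. Your substitution $t=c-3ab$, yielding the symmetric identity $b^2+3abt+t^2=m-a^2$, makes the contradiction $(1+3a+1)V^2=m-a^2$ immediate, whereas the paper takes a slightly longer detour by first bounding $c$ from above and then deducing $c-3ab<V$.
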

\begin{proof}
    By definition of minimality, we have
    $3ab\le c$. If $3ab-c=0$, then we are done and hence we assume  that 
    $c-3ab>0.$

      Suppose that both 
       $b$ and $c-3ab$ are greater than $\sqrt{\frac{m-a^2}{3a+2}}$. 
    Re-writing (\ref{Mm}) as
    $b^2+c(c-3ab)=m-a^2$, we have
    $$ \frac{m-a^2}{3a+2}+c \sqrt{\frac{m-a^2}{3a+2}}<m-a^2$$
    or
    $$c \sqrt{\frac{m-a^2}{3a+2}}<\frac{3a+1}{3a+2} (m-a^2),$$
    which gives 
\begin{equation}\label{cless}c<\frac{(3a+1)\sqrt{m-a^2}}{\sqrt{3a+2}}.
    \end{equation}
    Now $$c-3ab<c-3a\sqrt{\frac{m-a^2}{3a+2}}<
    \sqrt{\frac{m-a^2}{3a+2}}$$
    (using  inequality (\ref{cless}) above), which contradicts our assumption and hence the result follows.
    \end{proof}
    
\vspace{0.5cm}

We now look at solution trees, for which we first define the root of a tree.

\begin{definition}\label{root}
Let $(a,b,c)$ be a minimal triple. Then the {\it root} of the associated tree is given by
$$
\begin{cases}
    (a, b, c) & {\text{ if } (a, b, c) \text{ is proper }}\\
    (a,c,3ac-b) & {\text{ if } (a, b, c) \text{ is improper. }}
\end{cases}
$$
\end{definition}

\vspace{0.5cm}
\begin{example}
The triple $(1,2,6)$ is a root because it is a proper minimal $5$-Markoff triple. The triple $(1,5,14)$ is a root because, although it is not minimal, it arises from the $12$-Markoff triple $(1,1,5)$, which is an improper minimal triple.
\end{example} 

\vspace{0.5cm}

The tree of solutions with root $(a,b,c)$ is constructed as follows: if $(x,y,z)$ is an $m$-Markoff triple, the nodes coming out are $(x,z,3xz-y)$ and $(y,z,3zy-x)$.

As a particular example, in Fig.\ref{5-markovtree}, we display the beginning of the $5$-Markoff triple with root $(1,2,6)$. 

\color{black}
\begin{figure}[H]
\centering

\begin{tikzpicture}[grow'=right,level distance=1.5in, sibling distance=.15in]
\tikzset{edge from parent/.style= 
            {thick, draw, edge from parent fork right},      every tree node/.style={
			draw,
			rounded corners,
			anchor = west,
			text width=25mm,
			align=center}}
   \Tree 
    [. (1,2,6) 
        [.(1,6,16)
            [.(1,16,42) 
                    [.(1,42,110)$\cdots$  ]
                    [.(16,42,2015)$\cdots$ ]
            ]
            [.(6,16,287)
                    [.(16,287,13770)$\cdots$ ]
                    [. (6,287,5150)$\cdots$ ]
            ]
        ]
        [.(2,6,35)
            [.(6,35,628) 
                    [.(6,628,11269)$\cdots$  ]
                    [.(35,628,65934)$\cdots$ ]
            ]
            [.(2,35,204) 
                    [.(35,204,21418)$\cdots$ ]
                    [.(2,204,1189)$\cdots$ ]
            ]
        ] 
    ]
\end{tikzpicture}
    \caption{Beginning of the $5$-Markoff tree with root $(1,2,6)$.}
    \label{5-markovtree}
\end{figure}
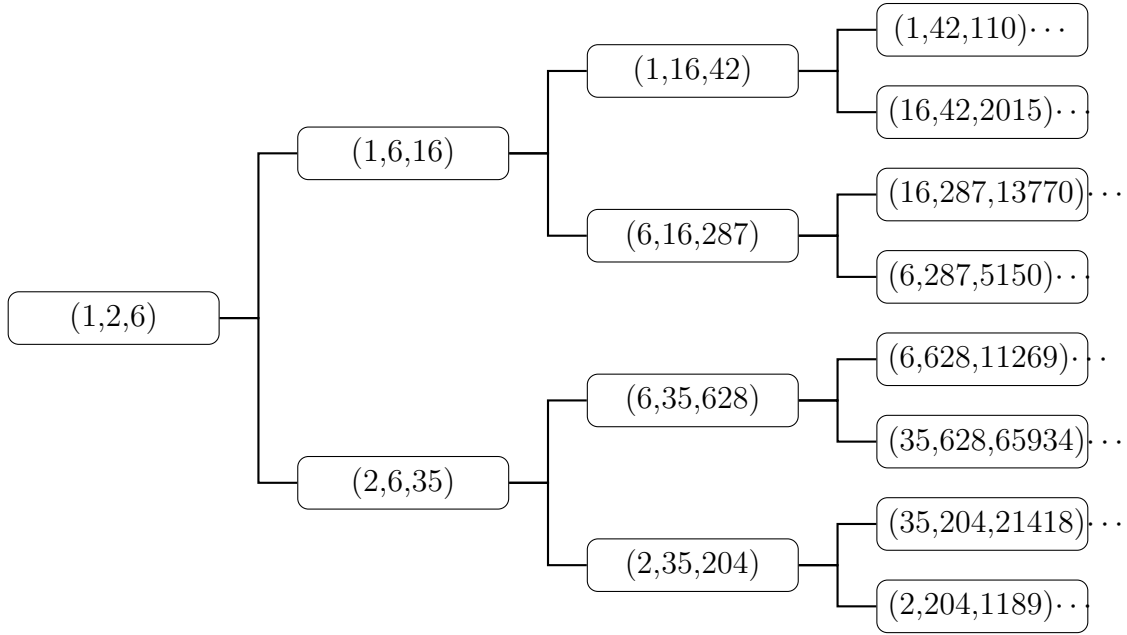

\vspace{0.5cm}
\begin{example} In the case $m=5$, there is only one root and hence only one tree of solutions. For $m=4,$ there are no solutions and for $m=12$, there are two roots $(1,5,14)$ and $(1,2,7)$, and thus two different solution trees.
\end{example}

\vspace{0.5cm}

\section{Binary quadratic forms and fundamental solutions}
 In this section, we present the basic theory of
binary quadratic forms. An excellent  reference
for this topic is \cite{Ri}, where in particular, the reader may 
consult Chapter 6, Sections 4 to 7 for the material presented here. 

\subsection{Binary quadratic forms}

A {\sl primitive binary quadratic form} $f=(a, b, c)$ of discriminant $d$ is
a function $f(x, y)=ax^2+bxy+cy^2$, where $a, b,c $ are integers
with $b^2-4 a c=d$ and $\gcd(a, b, c)=1$.  Note that the discriminant $d$ is always $0$ or $1$ mod $4$.
 All forms considered here
are primitive binary quadratic forms and henceforth we shall
refer to them simply as forms. 

Two forms $f$ and $f'$ are said to be {\it equivalent}, written as
$f\sim f'$,  if for some
$A=\begin{pmatrix} \alpha &\beta \\ \gamma & \delta \end{pmatrix}
\in SL_2(\mathbb Z)$ (called a transformation matrix),  we have
$f'(x,y)=f(\alpha x+\beta y, \gamma x+\delta y)
=(a',b',c')$, where 
$a', b', c'$ are given by
\begin{equation}
a'=f(\alpha, \gamma),\hskip2mm
b'=2(a\alpha\beta+c\gamma\delta)+b(\alpha\delta+\beta\gamma),\hskip2mm
c'=f(\beta, \delta).
\end{equation}
 It is easy to see
that $\sim$ is an equivalence relation on the set of forms of
discriminant $d$. The equivalence classes form an abelian group
called the  {\it class group} with group law given by composition of
forms.

The {\it identity form} is defined as the form $(1,0,\frac{-d}{4})$
or $(1, 1, \frac{1-d}{4})$, depending on whether $d$ is even or odd
respectively. 
 The {\it inverse} of
$f=(a, b, c)$ denoted by $f^{-1}$, is given by $(a,-b,c).$

A form $f$ is said to represent an integer $m$ if there exist 
integers $x$ and $y$ such that $f(x,y)=m$. If $\gcd(x, y)=1$, we 
call the representation a primitive one.
Observe that equivalent forms primitively represent the same set
of integers, as do a form and its inverse. Observe that the identity form represents the integer $1$. Moreover, any form that represents $1$ is equivalent to the identity form.

\color{black}

The following lemma tells us when an integer is represented by a form of a given discriminant.
\begin{lemma}\label{4n}\cite{Ri}[ Solution of Problem 1] Let
    $d\equiv 0$ or $1\mod 4$.  
        Then there exists a primitive representation of an integer $N$ by a form
        of discriminant $d$ if and only if
        $d\equiv x^2\pmod{4N}$ for some integer $x$.

\end{lemma}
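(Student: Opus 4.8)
The plan is to prove both directions by translating the condition ``$N$ is primitively represented by some form of discriminant $d$'' into the existence of a form in which $N$ appears as the leading coefficient, and then to read off the congruence $d\equiv x^2\pmod{4N}$ from the middle coefficient. First I would observe that if a form $f=(A,B,C)$ of discriminant $d$ primitively represents $N$, say $f(x_0,y_0)=N$ with $\gcd(x_0,y_0)=1$, then by choosing integers $r,s$ with $x_0 s - y_0 r = 1$ the matrix $\begin{pmatrix} x_0 & r \\ y_0 & s\end{pmatrix}\in SL_2(\mathbb Z)$ transforms $f$ into an equivalent form $f'=(N,B',C')$ whose leading coefficient is exactly $f(x_0,y_0)=N$. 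Since equivalent forms have the same discriminant, $B'^2 - 4NC' = d$, and therefore $d\equiv B'^2\pmod{4N}$; this gives the forward direction with $x=B'$.

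For the converse, suppose $d\equiv x^2\pmod{4N}$ for some integer $x$. Then $x$ and $d$ have the same parity (since $d\equiv 0,1\pmod 4$ forces $x^2\equiv d\pmod 4$, hence $x\equiv d\pmod 2$ after adjusting $x$ by $N$ if necessary so that $x$ has the right parity mod $2$ — one checks that replacing $x$ by $x+N$ still satisfies the congruence mod $4N$ when $N$ is odd, and when $N$ is even the parity is already forced). Writing $d = x^2 - 4NC$ for the appropriate integer $C$, the triple $(N, x, C)$ is a binary quadratic form of discriminant $d$ which represents $N$ via $(1,0)$, and this representation is primitive since $\gcd(1,0)=1$. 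The only subtlety is primitivity of the form itself: if $\gcd(N,x,C)=g>1$, I would note that $g^2 \mid d$, and then divide through appropriately, or instead argue directly that one may always adjust $x$ modulo $2N$ to make $(N,x,C)$ primitive — this is where the reference \cite{Ri} does the careful bookkeeping.

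The main obstacle, and the only place where real care is needed, is exactly this parity-and-primitivity adjustment in the converse direction: given the bare congruence $d\equiv x^2\pmod{4N}$ one must produce a genuinely \emph{primitive} form of discriminant $d$ with leading coefficient $N$, and this requires choosing the representative $x$ modulo $2N$ judiciously (using that $d$ is fixed mod $4$ and chasing the congruence through the cases $N$ odd versus $N$ even). Since the statement is quoted verbatim as ``Solution of Problem 1'' from Chapter 6 of \cite{Ri}, I would cite that source for the full verification of this step rather than reproduce it, and content myself with the clean equivalence: primitive representations of $N$ correspond to forms $(N, B, C)$ of discriminant $d$, and such forms exist precisely when $d$ is a square modulo $4N$. $\square$
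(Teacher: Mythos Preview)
The paper does not give its own proof of this lemma: it is stated with a citation to \cite{Ri} (``Solution of Problem~1'') and used as a black box. So there is nothing in the paper to compare your argument against; you have in fact supplied more than the authors do.

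That said, your sketch is the standard classical argument and is correct in outline. The forward direction is clean: complete $(x_0,y_0)$ to an $SL_2(\mathbb Z)$ matrix, transform $f$ to a form with leading coefficient $N$, and read off $d\equiv B'^2\pmod{4N}$. For the converse you build the form $(N,x,C)$ directly from the congruence, which is exactly the textbook construction. The primitivity issue you flag is genuine---if $\gcd(N,x,C)=g>1$ then $(N,x,C)$ is not a ``form'' in the paper's sense---and is the one place where care is needed; deferring to \cite{Ri} for that bookkeeping is consistent with what the paper itself does. Your parenthetical about adjusting the parity of $x$ by adding $N$ is a bit loose (adding $N$ changes $x^2$ mod $4N$ in general), but since you ultimately cite the reference for this step it does not affect the validity of the proposal.
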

\subsection{Fundamental solutions}
It is well known that all representations of an integer $N$ by a given binary quadratic form may be put into equivalence classes. 
In this section we consider representations of  $m-a^2$ by the form $x^2-3axy+y^2$, namely,
\begin{equation}\label{fa}
F(x,y)=x^2-3axy+y^2=m-a^2,
\end{equation}
where the form in question is of discriminant $d=9a^2-4$, for $0<a<\sqrt{m}$.
In 
 each equivalence class there is a unique fundamental solution $(u, v)$ with least non-negative value of $v$. The following result from \cite[Theorem 4.1]{MRS} (modified to fit our case) gives us the fundamental solutions of (\ref{fa}), where we have used the fact that the fundamental solution of the Pell equation $x^2-dy^2=4$ is $(x,y)=(3a, 1)$. 
\begin{theorem}\label{MRS}\cite[Theorem 4.1]{MRS} Let $m>1$ and 
$a<\sqrt{m}$ be positive integers.
Let $V=\sqrt{(m-a^2)/(3a+2)}$ and  $U=\sqrt{(m-a^2)(3a+2)}$. 
Then a solution
$(u, v)$ with $v\ge 0$ of (\ref{fa}) is a fundamental solution 
if and only if one of the following holds:
   \begin{enumerate}
   \item $0 < v < V$.
   \item $v=0$ and $u=\sqrt{m-a^2}$.
   \item $v=V$ and $u=(U+3aV)/2$.
   \end{enumerate}
\end{theorem}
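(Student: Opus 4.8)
The plan is to follow the standard reduction-theory argument for representations by indefinite binary quadratic forms, specialized to the form $F(x,y)=x^2-3axy+y^2$ of discriminant $d=9a^2-4$. The automorphisms of $F$ (equivalently, the solutions of $F(x,y)=F(x',y')$ within a class) are governed by the Pell equation $x^2-dy^2=4$, whose fundamental solution is $(x,y)=(3a,1)$; this is checked by direct substitution, since $9a^2-(9a^2-4)\cdot 1=4$, and one verifies it is the smallest by inspecting small $y$. From this fundamental Pell solution one builds the automorphism operator that moves between representations in the same equivalence class, and the claim is that each orbit under this operator contains exactly one representative $(u,v)$ whose second coordinate $v$ lies in the half-open window determined by $V=\sqrt{(m-a^2)/(3a+2)}$ and $U=\sqrt{(m-a^2)(3a+2)}$, with the two boundary cases ($v=0$ and $v=V$) requiring the extra constraints on $u$ to break ties.

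The key steps, in order, are: (i) Recall from \cite[Theorem 4.1]{MRS} the general statement: for $F(x,y)=N$ with $F$ of discriminant $d>0$ and $(t_0,u_0)$ the fundamental solution of $t^2-du^2=4$, a solution $(u,v)$ with $v\ge 0$ is fundamental (least nonnegative $v$ in its class) iff $0<v<\sqrt{N(t_0-2)/(d)}$-type bound holds, or one of two explicit boundary cases. (ii) Substitute $N=m-a^2$, $d=9a^2-4$, $t_0=3a$, $u_0=1$ into the general bounds and simplify. Here $\frac{t_0-2}{d}=\frac{3a-2}{9a^2-4}=\frac{3a-2}{(3a-2)(3a+2)}=\frac{1}{3a+2}$, which is exactly how $V^2=(m-a^2)/(3a+2)$ arises; similarly the complementary quantity $\frac{t_0+2}{d}=\frac{1}{3a-2}$... wait, one must be careful, so I would instead track $U^2=(m-a^2)(3a+2)$ as the product $NV\cdot$ (something), checking that the relation $u=(U+3aV)/2$ in case 3 is precisely $F(u,V)=m-a^2$ with $v=V$ at the boundary — i.e. solving $u^2-3aVu+V^2=m-a^2$ and taking the root that makes $(u,v)$ reduced. (iii) Verify the two boundary cases directly: for $v=0$, $F(u,0)=u^2=m-a^2$ forces $u=\sqrt{m-a^2}$ (and this is an integer exactly when $m-a^2$ is a perfect square); for $v=V$, check that $V$ must be an integer for this case to occur and that $(U+3aV)/2$ is then the correct integer partner.

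The main obstacle I anticipate is bookkeeping at the two boundaries rather than any conceptual difficulty: the inequalities coming out of \cite{MRS} are stated in a slightly different normalization (their form, their sign conventions for the middle coefficient $b=-3a$, and their choice of "fundamental" as least $v\ge 0$ versus some sources using least $|u|$), so the real work is to translate carefully and confirm that the strict/non-strict inequalities $0<v<V$ versus $v=V$ land in the right places, and that no representation is double-counted or missed. Once the dictionary $(N,d,t_0,u_0)\mapsto(m-a^2,\,9a^2-4,\,3a,\,1)$ is installed and the algebraic identity $9a^2-4=(3a-2)(3a+2)$ is used to collapse the bounds to the clean forms $V=\sqrt{(m-a^2)/(3a+2)}$ and $U=\sqrt{(m-a^2)(3a+2)}$, the statement follows immediately from the cited theorem.
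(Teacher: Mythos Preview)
Your proposal is correct and matches the paper's own treatment: the paper does not give an independent proof of this theorem but simply invokes \cite[Theorem 4.1]{MRS} and specializes it using the fundamental Pell solution $(t_0,u_0)=(3a,1)$ of $x^2-(9a^2-4)y^2=4$, together with the factorization $9a^2-4=(3a-2)(3a+2)$ to obtain the stated $U,V$. Your write-up supplies exactly this dictionary and the boundary checks, so nothing further is needed.
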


\begin{corollary}\label{Corollary3.1} Let $m>1$ and $a<\sqrt{m}$. 
Let $V=\sqrt{\frac{m-a^2}{3a+2}}$ and  $U=\sqrt{(m-a^2)(3a+2)}$. Let 
 $(a, b, c)$ be an  $m-$Markoff triple such that either $(a, b, c)$ or $(b,a,c)$ is minimal. Suppose that  either $b=V$
 or $c-3ab=V$. Then
$c=3ab+b$ and 
$(c, b)$ is a fundamental solution for (\ref{fa}) with $N=m-a^2$.
\end{corollary}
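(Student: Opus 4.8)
The plan is to exploit the symmetry in $b$ and $c-3ab$ that is hidden in equation (\ref{Mm}). First I would set $t:=c-3ab$; since $(a,b,c)$ or $(b,a,c)$ is minimal, in either ordering Definition \ref{minimaltriple} gives $3ab\le c$, so $t\ge 0$. Writing $c=3ab+t$ and substituting into the rearrangement $b^2+c(c-3ab)=m-a^2$ of (\ref{Mm}) (cf. (\ref{mc})) yields
\begin{equation}\label{symm-rel}
b^2+3ab\,t+t^2=m-a^2,
\end{equation}
an identity that is symmetric in $b$ and $t$.

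Next I would treat the two cases permitted by the hypothesis. If $b=V$, then $m-a^2=(3a+2)V^2=(3a+2)b^2$, so (\ref{symm-rel}) becomes $t^2+3ab\,t-(3a+1)b^2=0$, which factors as $(t-b)\bigl(t+(3a+1)b\bigr)=0$; since $t\ge 0$ and $b\ge 1$, the second factor is strictly positive, forcing $t=b$, i.e. $c=3ab+b$. If instead $c-3ab=t=V$, the same computation with the roles of $b$ and $t$ interchanged gives $(b-t)\bigl(b+(3a+1)t\bigr)=0$, whence $b=t=V$ and again $c=3ab+b$. So in both cases $b=c-3ab=V$ and $c=(3a+1)V$.

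Finally, to see that $(c,b)$ is a fundamental solution of (\ref{fa}) with $N=m-a^2$, I would note that $F$ is symmetric, so $F(c,b)=F(b,c)=m-a^2$, i.e. $(c,b)$ is a solution, and its second coordinate is $v=b=V\ge 0$. Hence $(c,b)$ is covered by case 3 of Theorem \ref{MRS}, and it only remains to verify the stated value of $u$: from the definitions of $U$ and $V$ one has $UV=m-a^2$ and $U/V=3a+2$, so $U=(3a+2)V$ and therefore $(U+3aV)/2=(3a+1)V=c$. Thus condition 3 of Theorem \ref{MRS} holds for $(c,b)$, which is therefore a fundamental solution.

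I do not expect a genuine obstacle. The two points that require a little care are the use of minimality to ensure $t\ge 0$ (so that the spurious linear factor in each quadratic is strictly positive and may be discarded), and the elementary identities $U=(3a+2)V$ and $c=(3a+1)V$ that place $(c,b)$ precisely in case 3 of Theorem \ref{MRS}; everything else is routine algebra powered by the $b\leftrightarrow t$ symmetry of (\ref{symm-rel}).
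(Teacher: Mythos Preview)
Your proof is correct and follows essentially the same route as the paper's. Both arguments reduce the case $b=V$ to an algebraic identity forcing $c-b=3ab$, handle $c-3ab=V$ by the $b\leftrightarrow (c-3ab)$ symmetry, and then verify condition~3 of Theorem~\ref{MRS} via $U=(3a+2)V$; your explicit substitution $t=c-3ab$ and the symmetric relation $b^2+3abt+t^2=m-a^2$ make the second case slightly more transparent than the paper's appeal to the auxiliary triple $(a,\,c-3ab,\,3a(c-3ab)+b)$, but the content is the same.
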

\begin{proof} Let us first assume that $b=V$.  Then 
$$b^2=\frac{m-a^2}{3a+2}=\frac{b^2+c^2-3abc}{3a+2}$$
giving  
$$b^2(3a+1)=c^2-3abc$$
or
$$3ab^2+b^2=c^2-3abc.$$ 
It follows that 
$$c^2-b^2=3ab(b+c)$$
and hence
\begin{equation}\label{abc}
  c=3ab+b.  
\end{equation}
 As $b=V$, from Theorem \ref{MRS}, part 3), to conclude the proof we will show that 
$c=\frac{U+3aV}{2}$. We have 
$U=b(3a+2)$ (follows from $V=b$) and hence 
$\frac{U+3aV}{2}=\frac{b(3a+2)+3ab}{2}=3ab+b=c$ (from (\ref{abc})).

The case when $c-3ab=V$ follows on applying the above proof to the minimal triple $(a, c-3ab, 3a(c-3ab)+b)$.
\end{proof}
\color{black}

The following lemma is well known and the result is classical. However as a clear reference seems to be lacking, we provide proof.
\begin{lemma}\label{wn} Let $N>1$ be a positive integer and let $w(N)$ denote the number of distinct prime divisors of $N$. 
Suppose that $N=AB^2$, where $A$ is square-free. Then there exists a primitive representation
$F(x,y)=N$ if and only if  $5\nmid B$ and 
    $(\frac{p}{5})\ne -1$
    for every $p|N$. Furthermore, the number of fundamental solutions is equal to 
    $2^{w(N)}$ if $N\not\equiv 0\pmod{5}$ and 
    equal to $2^{w(N)-1}$ if $5|N$.
\end{lemma}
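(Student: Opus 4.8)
The plan is to treat the two assertions separately, reducing each to an elementary congruence count. \emph{For the existence criterion}, since $F(1,0)=1$ the form $F(x,y)=x^{2}-3xy+y^{2}$ represents $1$, hence is equivalent to the identity form of discriminant $d=9-4=5$; as the discriminant $5$ has class number $1$ (a single form in the class group, as recalled in the introduction), every primitive form of discriminant $5$ is equivalent to $F$. Therefore, by Lemma \ref{4n}, $N$ admits a primitive representation by $F$ if and only if the congruence $x^{2}\equiv 5\pmod{4N}$ is solvable, and I would analyse this prime by prime via the Chinese Remainder Theorem. At the prime $2$: $x^{2}\equiv 5\equiv 1\pmod 4$ is solvable but $x^{2}\equiv 5\pmod 8$ is not, so solvability forces $2\nmid N$; this is subsumed by the stated condition because $\left(\frac{2}{5}\right)=-1$. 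At an odd prime $p\neq 5$ dividing $N$: as $5$ is a unit modulo $p$, Hensel's lemma shows $x^{2}\equiv 5\pmod{p^{v_{p}(N)}}$ is solvable iff $\left(\frac{5}{p}\right)=1$, and since $5\equiv 1\pmod 4$ quadratic reciprocity gives $\left(\frac{5}{p}\right)=\left(\frac{p}{5}\right)$. At the prime $5$: $x^{2}\equiv 5\pmod 5$ forces $5\mid x$, whence $x^{2}\equiv 0\not\equiv 5\pmod{25}$, so solvability forces $25\nmid N$, which---using $N=AB^{2}$ with $A$ square-free---is exactly $5\nmid B$; in the remaining case $5\mid N$, $25\nmid N$ the congruence modulo $5$ is solvable, consistently with $\left(\frac{5}{5}\right)=0\neq -1$. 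Assembling these conditions gives precisely ``$5\nmid B$ and $\left(\frac{p}{5}\right)\neq -1$ for every $p\mid N$''.

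\emph{For the count}, I would invoke the classical correspondence (see \cite{Ri}, Chapter~6) between equivalence classes of primitive representations of $N$ by forms of discriminant $d$ and residue classes $B\pmod{2N}$ with $B^{2}\equiv d\pmod{4N}$: a primitive representation $F(x_{0},y_{0})=N$, completed (using $\gcd(x_{0},y_{0})=1$) to a matrix in $SL_{2}(\mathbb{Z})$, carries $F$ to a form $\bigl(N,B,\tfrac{B^{2}-d}{4N}\bigr)$, the class of $B\pmod{2N}$ depending only on the orbit of $(x_{0},y_{0})$ under the automorph group of $F$; conversely each admissible $B$ produces such a form, which---since $h(5)=1$---is equivalent to $F$ and hence gives a primitive representation. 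The orbit representative with least non-negative second coordinate is exactly the fundamental solution of Theorem \ref{MRS}, so the number of fundamental solutions equals $\#\{B\bmod 2N:\ B^{2}\equiv 5\pmod{4N}\}$. To count these, assume the conditions above hold, so $N$ is odd and $25\nmid N$: modulo $4$ there are $2$ square roots of $5$; modulo $p^{v_{p}(N)}$ there are $2$ for each odd prime $p\neq 5$ dividing $N$ (Hensel); and modulo $5$ there is exactly $1$ root when $5\mid N$. Multiplying, the number of solutions modulo $4N$ is $2^{w(N)+1}$ when $5\nmid N$ and $2^{w(N)}$ when $5\mid N$. Since these solutions fall into pairs $\{B,\,B+2N\}$ with equal square modulo $4N$, passing to residues modulo $2N$ halves the count, giving $2^{w(N)}$ when $5\nmid N$ and $2^{w(N)-1}$ when $5\mid N$.

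The delicate points, both in the counting step, are: (i) pinning down the classical correspondence with the precise modulus $2N$ and checking that the orbit representatives it produces coincide with the fundamental solutions of Theorem \ref{MRS}---this is where an exact citation to \cite{Ri} carries the weight; and (ii) the behaviour at the ``bad'' primes $2$ and $5$, where Hensel's lemma does not apply and one argues directly---in particular recognising that the prime $5$ contributes a factor $1$ rather than $2$, which is precisely the source of the $2^{w(N)-1}$ case. All the remaining congruence bookkeeping is routine. (One could alternatively carry out the count in the ring of integers of $\mathbb{Q}(\sqrt5)$, a principal ideal domain, by counting primitive ideals of norm $N$, where the split, ramified and inert behaviour of primes reproduces the same factors.)
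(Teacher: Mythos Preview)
Your proof is correct and follows essentially the same route as the paper's: both reduce the existence claim to Lemma~\ref{4n} (solvability of $x^{2}\equiv 5\pmod{4N}$) and the counting claim to the classical bijection between automorph-orbits of primitive representations and roots $X\bmod 2N$ of $X^{2}\equiv 5\pmod{4N}$, exploiting $h(5)=1$ so that every such root comes from $F$. The only difference is granularity: the paper dispatches the congruence count by citing \cite{Ri} and \cite[Theorem~122]{HW}, while you spell out the prime-by-prime analysis (including the quadratic-reciprocity identification $\left(\frac{5}{p}\right)=\left(\frac{p}{5}\right)$, the exclusion of $2\mid N$ via $\left(\frac{2}{5}\right)=-1$, and the factor $1$ at the ramified prime $5$) and explain the halving from residues mod $4N$ to residues mod $2N$.
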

\begin{proof}  
We start with the observation that  $F(x,y)$ is the only form in the class group here (as the class number of $\mathbb{Q}(\sqrt{5})$ is $1$. Therefore all fundamental solutions of $N$ correspond to $F(x,y)$.
Next, by Lemma \ref{4n} we have that  $F(x,y)=N$ is a primitive representation if and only if  
  $5\equiv l^2\pmod{4N}$ for some integer $l$. It follows that 
  there exists a primitive representation  $F(x,y)=N$ if and only if 
  $(\frac{5}{p})\ne -1$
    for every $p|N$.  Moreover, $25\nmid N$ as 
    $5\equiv l^2\pmod {25}$ has no solutions, and thus $5\nmid B$. It is well known that every fundamental solution $F(x,y)=N$ corresponds to a solution 
    $d\equiv X^2\pmod{4N}$, where
    $1\le X\le 2N$ and vice-versa (see for example, \cite{Ri}[ Solutions of Problem 2, 3, Problems 4, 5, pages 120-121] and 
 \cite{MRS}[(1.1)-(1.4)]). Therefore the number of fundamental solutions is equal to the number of solutions of this congruence. It follows from elementary number theory (\cite{HW}[Theorem 122]) that the number of solutions of this congruence is as stated in the lemma. 
\end{proof}

\section{Proofs of the main theorems}

\noindent{\bf Proof of Theorem \ref{th1.1}}
  We start with the observation that if $(a, b, c)$ is an ordered $m$-Markoff triple, then $\VV_3(a, b, c)=(a, b, 3ab-c)$ has maximal element $b$ (by Lemma \ref{fact}). Continuing in this way (as long as all components of the triple are positive), ordering the triples each time and applying the Vieta involution $\VV_3$, we will arrive at a triple $(a_0, b_0,c_0)$ such that $c_0\le 0$. It follows that
 the triple $\VV_3(a_0, b_0, c_0)$ is minimal and hence the given triple is on the tree with root associated to this minimal triple (see Definition \ref{root}).
To complete our proof, we observe that an $m$-Markoff triple cannot belong to two different trees by the reasoning above, as each tree has a unique root.  
\hfill$\square$

\vspace{0.7cm}

\noindent{\bf Proof of Theorem \ref{th1.2}} 

\noindent Let $0<a<\sqrt{m}$ be a fixed integer. Note that $T_a$(defined in (\ref{Ta})) contains all the minimal triples that contain $a$ as the first or second component, reordering the triple in the latter case, so as to have the first component as $a$. We will show there is a one-to-one correspondence between $T_a$ and $S_a$, the set of all fundamental solutions of 
    $F(x,y)=m-a^2$.

We first define a map $F$ from $T_a$ to $S_a$ as follows. Let $(a, b, c)\in T_a$ and let $U, V$ be as given in Theorem \ref{MRS}. Then 
\begin{equation}\label{FS}
 F(a, b, c)=   
 \begin{cases}
 (c,b) & {\text{ if }} b\le V \\
 (-b,c-3ab) & {\text{ if }} b> V, c-3ab> 0 \\
 (b,0) & {\text{ if }} c-3ab=0.
\end{cases}   
\end{equation}

Observe that $F$ is well-defined, as 
if $b\le V$ then  from Theorem \ref{MRS} and Corollary \ref{Corollary3.1} we have that $(c, b)$ is a fundamental solution for $N=m-a^2$.

If $b>V$, then by Lemma \ref{either} we have 
$c-3ab\le V$. If $c-3ab=0$, then $F(b,0)=a$ with $b=\sqrt{m-a^2}$ and so by Theorem \ref{MRS}, part 2 we have $(b,0)$ is a fundamental solution.
Assume now that $0<c-3ab$. Note that $c-3ab=V$ is not possible as then $c-3ab=b$ by Lemma \ref{either} (and $b>V$). Thus 
$0<c-3ab<V$ and so by Theorem \ref{MRS} the solution $(-b, c-3ab)$ is  fundamental.

We proceed now to show that $F$ is surjective. Suppose that $(c,b)$ is a fundamental solution. 
If $b=0$, then $c>0$ (by Theorem \ref{MRS} part 2). Moreover, $m=a^2+c^2$ 
and $F(a, c, 3ac)=(c,0)$.

Next assume that $c>0$ (with $b>0$). Then 
$F(a,b, c)=(c,b)$ as $b\le V$. 

In the case when $c<0,$ note that
$(a, -c, b-3ac)$ is in $T_a$ by definition of minimality. Also from Theorem \ref{MRS} part 3 we see that $b\ne V$ (as $b=V$ implies that $u=c$ but $u>0$) and therefore $b<V$.  It follows that $-c>V$. Indeed assume that $c^2\le V$. Then as $\VV_3(a, -c, b-3ac)=(a, -c,-b)$ we have $F(-c, -b)=m-a^2$ and hence
$m-a^2= b^2+c^2+3ab|c|$ and thus
$$m-a^2<2\frac{m-a^2}{3a+2}+3a\frac{m-a^2}{3a+2}$$ which yields
$m-a^2<m-a^2$, a contradiction. Thus we have $-c>V$ and hence
$F(a, -c, b-3ac)=(c,b)$.

Next, we show that $F$ is injective. Suppose that  
$F(a, b, c)=F(a, b', c')$. If either $c=3ab$ or $c'=3a'b'$, it follows from (\ref{FS}) that  $c-3ab=c'-3a'b'=0$. Hence $b=b'$ which implies that $c=c'$. 

If both $b$ and $b'$ are less than or equal to $V$, then we have 
$F(a, b, c)= (c,b)=F(a, b', c')=(c',b')$ and it follows that the two triples are the same.  The case when both $b$ and $b'$ are greater than $V$ is analogous. Now we assume that $b\le V$ and $b'>V$. Clearly by definition of $F$, the images here cannot be equal as they are $(c,b)$ and 
$(-b', c'-3a'b')$ where $c$ and $b'$ are both positive.

Thus we have shown a bijection between $S_a$ and $T_a$. Observe that each minimal triple $(a, b, c)$  gives rise to two distinct fundamental solutions (one for $m-a^2$ and another for $m-b^2$), except when it is improper ($a=b$), and hence the formula given in the theorem follows.
\hfill$\square$

\vspace{0.7cm}

\noindent{\bf Proof of Theorem 1.3}

\noindent Suppose that $(1,b,c)$ is an $m$-Markoff triple. It follows that $F(b, c)=m-1$. Let $\gcd(b, c)=g$. Then $F\left(\frac{b}{g}, \frac{c}{g}\right)=\frac{m-1}{g^2}$ is 
a primitive representation and hence by Lemma \ref{4n}
we have $5\equiv x^2\pmod{4\frac{m-1}{g^2}}$ and so for every prime $p|\frac{m-1}{g^2}$ we have $(\frac{p}{5})\ne -1$ and the result follows.  Conversely let $m-1=S^2C$, where $C$ is square-free and satisfies the condition given in the lemma. Then
$5\equiv x^2\pmod C$ for some integer $x$.
Note that $C$ is odd  as $(\frac{2}{5})= -1$ and hence  $5\equiv y^2\pmod {4C}$ for some odd integer $y$ (if $x$ is even we consider $C-x$). It follows from 
Lemma \ref{4n} again, that there is a primitive representation of $C$ by some form of discriminant $d$. As the class number of $\mathbb{Q}(\sqrt{5})=1$, we have $F(b,c)=C$ and hence $F(Sb, Sc)=S^2C=m-1$. Thus $ R=(1, Sb, Sc)$ is a solution triple. If $Sb$ and $SC$ are both positive or both negative then $(1, |Sb|, |Sc|)$ is an $m$-Markoff triple. If $b$ or $c$ is less than or equal to $0$, then $\VV_2(R)$ or $\VV_3(R)$ respectively, is an $m$-Markoff triple and the proof of part 1 of the theorem is complete.

For the second part of the lemma,   we assume that there exists an $m-$Markoff triple $(1, b,c)$.  By the remarks just above Theorem 1.3, we have that the number of minimal triples $(1, x, y)$ is equal to the cardinality of  $T_1$ and hence of $S_1$. Therefore it remains to find all fundamental representations of $m-1$. It is straight forward to see by Theorem \ref{MRS} that if $F(b,c)=m-1$ is a fundamental solution and $\gcd(b,c)=g$, then $F\left(\frac{b}{g}, \frac{c}{g}\right)=\frac{m-1}{g^2}$ is also a fundamental solution. Moreover, it is a primitive representation. Conversely,  if $F(b, c)=\frac{m-1}{g^2}$ is a primitive fundamental solution, then 
$F(gb, gc)=m-1$ is a fundamental solution. Therefore to find all fundamental solutions of $m-1$ we need to find all the primitive fundamental solutions of 
$\frac{m-1}{g^2}$ for all possible $g$. Let us assume that there is a primitive fundamental solution of 
$\frac{m-1}{g^2}$ for some $g$.
Given the conditions on $A$, no prime divisor of $A$ divides $\frac{m-1}{g^2}$ and hence $A|g$. Also the highest power of $5$ that can divide  $\frac{m-1}{g^2}$ is $1$ (Lemma \ref{wn}) and so $5^{\alpha}|g$. Thus $g=5^{\alpha}Ad$, where $d|B$.  We have now shown that to count all fundamental solutions of $m-1$, we need to count all the fundamental primitive solutions of $\frac{B^2C}{d^2}$ where $d$ varies over all the divisors of $B$.
The claim now follows on the application of Lemma \ref{wn} to each integer $\frac{B^2C}{d^2}$. 
\hfill$\square$

\section{Questions and conjectures}

We devote this section to questions and conjectures about the number of minimal triples. We start with a few definitions to make precise our statements.

Note that if $(a, b, c)$ is a solution of (\ref{Mm}), then $(-a, -b,c)$ is also a solution. Hence we define the following transformations $\Ss_i$ that each gives rise to solution triples.

\begin{definition}\label{sign}
Let $(a,b,c)$ be a solution triple for (\ref{Mm}). The {\it sign transformations } $\Ss_1,\Ss_2,\Ss_3$ are defined as follows:
\begin{align*}
\Ss_1(a,b,c)=(a,-b,-c)\\
\Ss_2(a,b,c)=(-a,b, -c)\\
\Ss_3(a,b,c)=(-a,-b,c).
\end{align*}
\end{definition}

If $(a, b, c)$ is a minimal triple, then by definition we have $\phi=c-3ab\ge 0$. Two of the neighbouring triples (of  $\VV_3(a, b, c)$) give rise to the  triples $\Ss_2\VV_1\VV_3(a, b, c)=(3b\phi+a, b, \phi)$ and 
$\Ss_1\VV_2\VV_3(a, b, c)=(a,3a\phi+b, b, \phi)$, which once ordered are also minimal. We define the order of a minimal triple $(a,b,c)$  as the number of distinct minimal triples of the three in question. To make this definition precise, let us write $o(a,b,c)$ for the triple that is obtained after ordering its components.
\begin{definition}\label{def:order}
    Let $(a, b, c)$ be a minimal triple with 
    $\phi=c-3ab.$ Then the order is defined as 
    $$
     ord(a, b, c)=
    \begin{cases}
    1 & {\text{ if }} \phi=0 \\    
   \#\{(a, b, c),o(a,3a\phi+b,\phi), o(3b\phi+a, b, \phi)\}  
   & {\text{ if }} \phi\ne 0
    \end{cases}
    $$
\end{definition}
It is straightforward to verify that when $\phi\ne 0$ we have 
$$ord(a,b,c)=\#\{a, b, \phi\}.$$
We denote by $\mathcal{O}_1(m), \mathcal{O}_2(m)$ and $\mathcal{O}_3(m)$ the set of minimal triples of orders 1,2 and 3, respectively. Let $\mathcal{O}(m)$  be the set of minimal triples. 

Clearly $$\mathcal{O}(m)=\mathcal{O}_1(m)\cup\mathcal{O}_2(m)\cup\mathcal{O}_3(m).$$
Furthermore, $\#\mathcal{O}_2(m)$ is multiple of 2 and $\#\mathcal{O}_3(m)$
    is multiple of 3. In Table \ref{tab:minimal_tiples}, we present the set of minimal triples for $m\le 50$, listed according to their orders. 
 
One of the questions we are interested in is whether there can be exactly one minimal triple, that is $\OO(m)=1$. Recall that this is the case for the usual Markoff equation. We prove the following necessary condition in this case.
\begin{proposition}  
 If $m>1$ is such that there is a unique minimal triple $(a,b,c)$, then either 
$c=3ab$,
or, $a=b$ and 
$c=3a^2+a.$
\end{proposition}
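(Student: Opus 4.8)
The plan is to exploit the notion of the \emph{order} of a minimal triple together with the uniqueness hypothesis. Write $\phi=c-3ab$; since $(a,b,c)$ is minimal we have $\phi\ge 0$ by Definition \ref{minimaltriple}. If $\phi=0$ then $c=3ab$ and we are in the first of the two listed alternatives, so the real content is the case $\phi\ne 0$, which I treat next.

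Assume $\phi\ne 0$. Recall from the discussion immediately preceding Definition \ref{def:order} that the ordered triples $o(a,3a\phi+b,\phi)$ and $o(3b\phi+a,b,\phi)$, obtained from $(a,b,c)$ by composing the Vieta involutions with the sign transformations and then reordering, are themselves minimal $m$-Markoff triples (their components are all positive since $a,\phi\ge 1$). Since by hypothesis $(a,b,c)$ is the \emph{only} minimal triple, each of these must equal $(a,b,c)$; by Definition \ref{def:order} this says precisely that $ord(a,b,c)=1$.

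Finally I invoke the identity $ord(a,b,c)=\#\{a,b,\phi\}$, valid for $\phi\ne 0$ (stated just after Definition \ref{def:order}). Combined with $ord(a,b,c)=1$ this forces $a=b=\phi$, hence $a=b$ and $c=3ab+\phi=3a^2+a$, the second listed alternative. In short, the argument is a two-line deduction once the two cited facts are in hand, namely that $o(a,3a\phi+b,\phi)$ and $o(3b\phi+a,b,\phi)$ are genuine minimal triples and that $ord(a,b,c)=\#\{a,b,\phi\}$. The only point that would need care if one wished to be fully self-contained is re-deriving the latter identity — equivalently, checking that these two ``child'' triples are pairwise distinct from $(a,b,c)$ exactly when $a,b,\phi$ are not all equal — but this is already recorded in the text, so I would simply cite it and no further work is required.
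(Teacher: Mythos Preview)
Your proof is correct and follows essentially the same approach as the paper's: both observe that when $\phi\ne 0$ the two derived ordered triples $o(\phi,b,3b\phi+a)$ and $o(\phi,a,3a\phi+b)$ are minimal and must therefore coincide with $(a,b,c)$, forcing $a=b=\phi$. The only cosmetic difference is that you package this via the order function and the identity $ord(a,b,c)=\#\{a,b,\phi\}$, whereas the paper argues directly by comparing maximal elements; the substance is identical.
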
 
\begin{proof}
As $(a,b,c)$ is minimal, we have $\phi=c-3ab\ge 0$. If $\phi=0$, then $c=3ab$. Consider  $\phi>0$.  Note that as $(a, b, c)$ is minimal, the triples
$(\phi, b, 3b\phi+a)$ and 
$(\phi, a, 3a\phi+b)$, once ordered, are also minimal $m$-triples (using Definition \ref{minimaltriple}).
As there is exactly one minimal triple, these triples must equal  $(a,b,c)$. Since the maximum elements are $3b\phi+a$ and 
$3a\phi+b$, it follows that 
$\phi=a=b$. Hence $c-3a^2=a$ and the claim follows.
\end{proof}
\begin{remark}

Observe that for a minimal triple 
$(a, b, c)$ we  have $\phi=c-3ab=0$ if and only if $m=a^2+b^2$ is a sum of two non-zero squares. Every such representation of $m$ as a sum of two non-zero squares gives rise to a minimal triple of order $1$, namely $(a, b, 3ab)$. In Table 3 we list the first few values of $m$ for which there is only one minimal triple, and $m$ is not a sum of two squares.
\end{remark}
In the following proposition, we present a sufficient condition for the existence of only triples of order $3$.


\begin{proposition}
    Let $m$ be a positive integer such that $9m-4$ is prime and $m$ is not a sum of two squares. Then $\#\mathcal{O}(m)$ is divisible by $3$.
    \end{proposition}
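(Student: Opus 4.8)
The plan is to show that, under the hypotheses, \emph{every} minimal triple has order $3$; since $\#\mathcal{O}_3(m)$ is a multiple of $3$ (as recorded above), this gives $3\mid\#\mathcal{O}(m)=\#\mathcal{O}_3(m)$ at once. The first move is to rewrite minimality symmetrically: if $(a,b,c)$ is a minimal triple and $\phi=c-3ab\ge 0$, then substituting $c=3ab+\phi$ into (\ref{Mm}) and simplifying yields the relation
$$a^2+b^2+\phi^2+3ab\,\phi=m,$$
which is symmetric in $a$, $b$ and $\phi$. Conversely, each integer solution of this equation with $1\le a\le b$ and $\phi\ge 0$ arises from the minimal triple $(a,b,3ab+\phi)$, so minimal triples correspond bijectively to such solutions. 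Using the identity $ord(a,b,c)=\#\{a,b,\phi\}$ (valid when $\phi\ne 0$) together with Definition~\ref{def:order}, a minimal triple has order $<3$ precisely in one of the cases: (i)~$\phi=0$; (ii)~$a=b=\phi$; (iii)~$\phi\ge 1$ and exactly two of $a,b,\phi$ coincide. It therefore suffices to rule out (i), (ii) and (iii).

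Cases (i) and (ii) are quick. In case (i) the symmetric relation reads $a^2+b^2=m$ with $a,b\ge 1$, so $m$ is a sum of two non-zero squares, contradicting the hypothesis. In case (ii) it reads $m=3a^2(a+1)$, and then a direct computation gives $9m-4=27a^3+27a^2-4=(3a-1)(3a+2)^2$, which is composite for every $a\ge 1$, contradicting the primality of $9m-4$.

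The main work is case (iii). In each of its three sub-cases the symmetric relation reduces to the single equation $2u^2+w^2+3u^2w=m$ with integers $u,w\ge 1$ (where $\{u,w\}$ is $\{a,\phi\}$, $\{a,b\}$, or $\{b,\phi\}$ according to which two of $a,b,\phi$ are equal). Regarding this as a quadratic in $w$ forces its discriminant $9u^4-8u^2+4m$ to be a perfect square $s^2$; multiplying by $9$ and completing the square in terms of $9u^2-4$ yields
$$4(9m-4)=(3s)^2-(9u^2-4)^2=(3s-9u^2+4)(3s+9u^2-4).$$
Both factors on the right are positive, and their difference equals $2(9u^2-4)\ge 10$. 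Writing $P:=9m-4$, which is prime and in fact an \emph{odd} prime (an even value of $9m-4$ would be composite), the only factorizations of $4P$ into two positive integers are $1\cdot 4P$, $2\cdot 2P$ and $4\cdot P$; equating the difference of the two factors with $4P-1$, with $2P-2$, and with $P-4$ respectively gives a parity contradiction in the first and third, and $P=3(3u^2-1)$ in the second, contradicting primality once more. Hence (iii) is impossible, so $\mathcal{O}(m)=\mathcal{O}_3(m)$ and $\#\mathcal{O}(m)$ is divisible by $3$.

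The step I expect to be most delicate is the final one: checking that all three sub-cases of (iii) genuinely collapse to the same equation $2u^2+w^2+3u^2w=m$, and then producing the factorization $4(9m-4)=(3s-9u^2+4)(3s+9u^2-4)$ cleanly; once these are in hand, running through the divisors of $4P$ is routine bookkeeping.
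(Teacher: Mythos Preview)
Your proof is correct and rests on the same idea as the paper's: rule out any coincidence among $a,b,\phi$ by exhibiting a nontrivial factorization of $9m-4$. The paper's execution is considerably shorter, however. Rather than passing through the discriminant of $w^{2}+3u^{2}w+(2u^{2}-m)=0$, it observes that whenever two of $a,b,\phi$ coincide the orbit contains an improper ordered triple $(u,u,c)$, and then simply verifies the identity
\[
9m-4=(3c-2)\,(3c-9u^{2}+2),
\]
checking that both factors exceed $1$ (the second is $\equiv 2\pmod 3$, hence $\neq 1$). This is exactly your factorization after the substitution $c=3u^{2}+w$, since then $3c-9u^{2}+2=3w+2$ and $3c-2=9u^{2}+3w-2$; so your $4(9m-4)=(3s-9u^{2}+4)(3s+9u^{2}-4)$ with $s=2w+3u^{2}$ is just four times the paper's identity. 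Working with $c$ directly lets the paper skip your separate case (ii), the multiplication by $9$, and the enumeration of divisor pairs of $4P$. On the other hand, your route is more self-contained: you never invoke the orbit to produce an improper triple, instead reducing all three sub-cases of (iii) to the single equation $2u^{2}+w^{2}+3u^{2}w=m$ by the symmetry of $a^{2}+b^{2}+\phi^{2}+3ab\phi=m$, which makes the logical structure very transparent at the cost of a few extra lines of algebra.
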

    \begin{proof}
        If $(a, a, c)$ is an $m-$Markoff triple then it is easy to verify that 
        \begin{equation}\label{9m-4}
            9m-4=(3c-2)(3c-9a^2+2).       \end{equation}
           If $(a, a, c)$ is ordered, then $c>1$, so that $3c-2>1$. Also, $3c+2-9a^2$ is positive and clearly not equal to $1$ (looking at it modulo $3$). Thus $9m-4$ is not prime if there exists any triple with the first two components equal. As a result, 
             if $9m-4$ is prime, there are no such triples. It follows from  Definition \ref{def:order} that for minimal triples with $\phi\ne 0$, the order is $3$.  Moreover, as $m$ is not a sum of two squares, $\phi\ne 0$ and hence $\mathcal{O}(m)=\mathcal{O}_3(m)$ and the result follows.
    \end{proof}

   We are ready now to pose some questions and conjectures based on the above two propositions.
   
\textbf{Conjecture 1 } There are infinitely many natural numbers $m$ with exactly one minimal triple. Moreover the number of such $m$ up to $x$ is $O\left(\frac{x}{\log x}\right)$ (see Figure \ref{figure3} (a) and Table \ref{tab:uniqueminimal_tiples1}).

\textbf{Conjecture 2} There are infinitely many primes $m=p$ congruent to $1$ mod $4$ with exactly one minimal triple
(see Figure \ref{figure3} (b)).

\textbf{Conjecture 3} There are infinitely many natural numbers $m$ such that $\#\mathcal{O}(m)$ is congruent to $0\pmod 3$ (see Figures \ref{figure4} and \ref{figure5}).

\textbf{Question 1} Are there infinitely many natural numbers $m$ with exactly one minimal $m$ triple of the kind $(1,b, c)$? (See Figure \ref{figure6} (a) and Table \ref{table4}). 

\textbf{Question 2} Are there infinitely many natural numbers $m$ whose minimal triples are all of the kind $(1,b, c)$? (See Figure \ref{figure6} (b) and Table \ref{table4}).

\section{Computations}

In this section, we present some graphs and computations that support the conjectures given in Section 5, and that we hope will lead to new ideas and observations.


\vspace{1cm}

\begin{figure}[H]
  \centering
  \begin{subfigure}[b]{0.47\textwidth}
    \centering  \includegraphics[width=\textwidth]{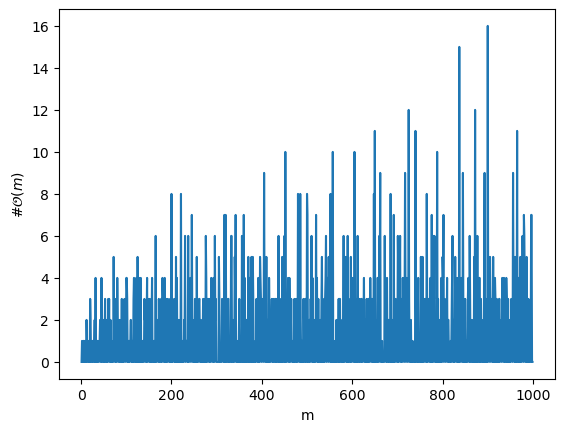}
    \caption{$m\leq 1000$}
    \label{fig:grafico1}
  \end{subfigure}
  \hfill
  \centering
  \begin{subfigure}[b]{0.47\textwidth}
    \centering  \includegraphics[width=\textwidth]{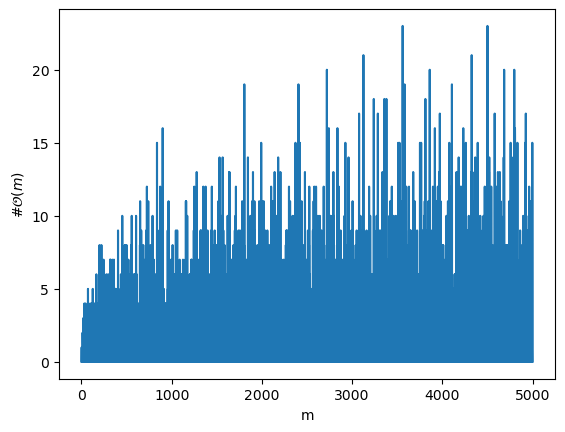}
    \caption{$m\leq 5000$}
    \label{fig:grafico7}
  \end{subfigure}
  \caption{Graph of the  function $\#\OO(m)$ }
  \label{figure2}
\end{figure}

\begin{figure}[H]
  \centering
  \begin{subfigure}[b]{0.47\textwidth}
    \centering  \includegraphics[width=\textwidth]{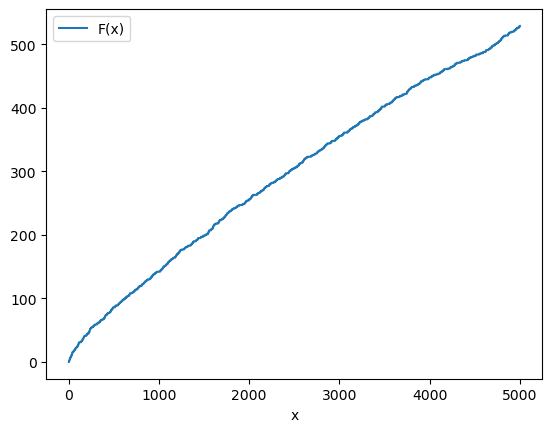}
    \caption{
    }
    \label{subfigure3}
  \end{subfigure}
  \hfill
  \centering
  \begin{subfigure}[b]{0.47\textwidth}
    \centering  \includegraphics[width=\textwidth]{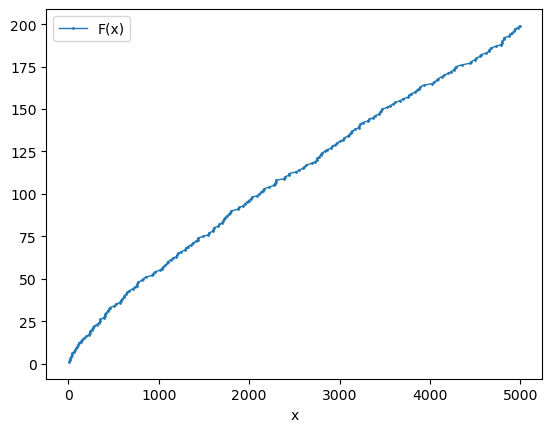}
    \caption{
  }
    \label{fig4}
  \end{subfigure}
  \caption{  $F(x)=$ number of $m\le x$ such that in a) $\#\OO(m)=1$ and in  (b) $\#\OO(m)=1$ with $m\equiv 1\pmod 4$ prime  }
  \label{figure3}
\end{figure}


\begin{figure}[H]
  \centering
  \begin{subfigure}[b]{0.45\textwidth}
    \centering  \includegraphics[width=\textwidth]{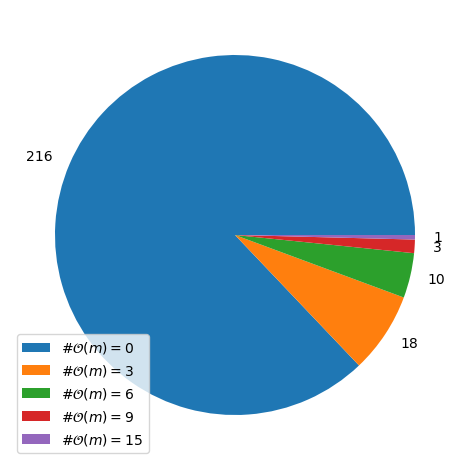}
    \caption{$m\leq 1000$}
    \label{fig:grafico3}
  \end{subfigure}
  \hfill
  \begin{subfigure}[b]{0.45\textwidth}
    \centering   \includegraphics[width=\textwidth]{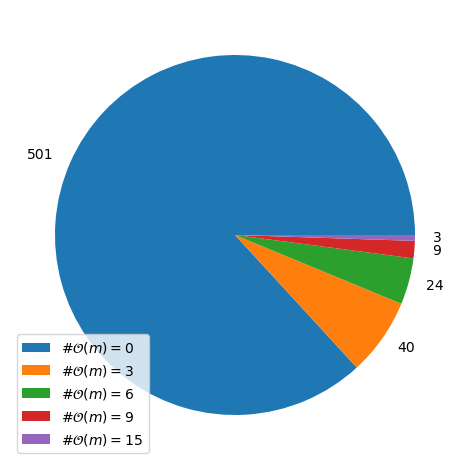}
    \caption{$m\leq 5000$}
    \label{fig:grafico8}
  \end{subfigure}
  \caption{Values of $\#\mathcal{O}(m)\equiv 0\pmod 3 $ with $9m - 4$  prime, $m\ne $ sum of two squares}

  \label{figure4}
\end{figure}

\begin{figure}[H]
  \centering
  \begin{subfigure}[b]{0.45\textwidth}
    \centering  \includegraphics[width=\textwidth]{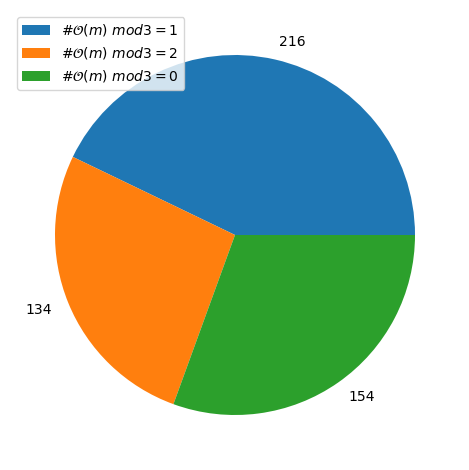}
    \caption{$m\leq 1000$}
    \label{fig:grafico4}
  \end{subfigure}
  \hfill
  \begin{subfigure}[b]{0.45\textwidth}
    \centering   \includegraphics[width=\textwidth]{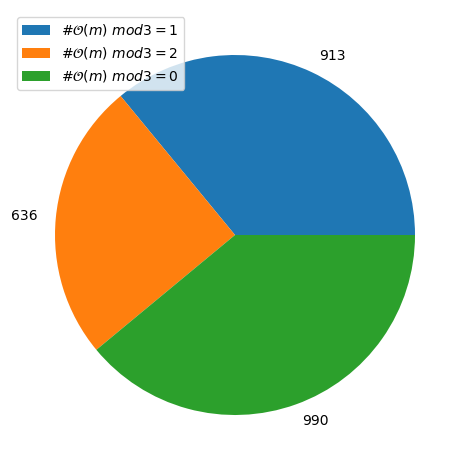}
    \caption{$m\leq 5000$}
    \label{fig:grafico2}
  \end{subfigure}
  \caption{Distribution of $\#\mathcal{O}(m)$  $mod\,\,3$, with $\#\mathcal{O}(m)\neq 0$  }
  \label{figure5}
\end{figure}




\begin{figure}[H]
  \centering
  \begin{subfigure}[b]{0.45\textwidth}
    \centering  \includegraphics[width=\textwidth]{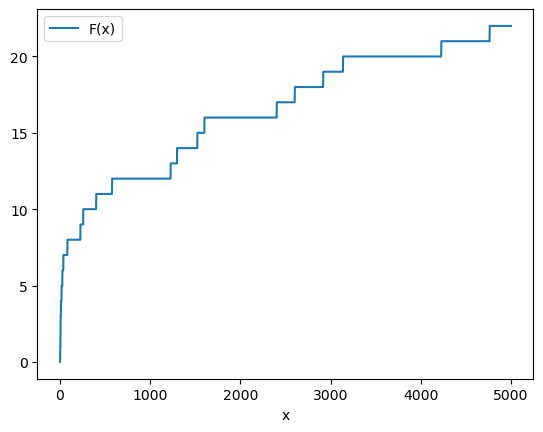}
    \caption{
  }
    \label{fig:grafico5}
  \end{subfigure}
  \hfill
  \begin{subfigure}[b]{0.45\textwidth}
    \centering   \includegraphics[width=\textwidth]{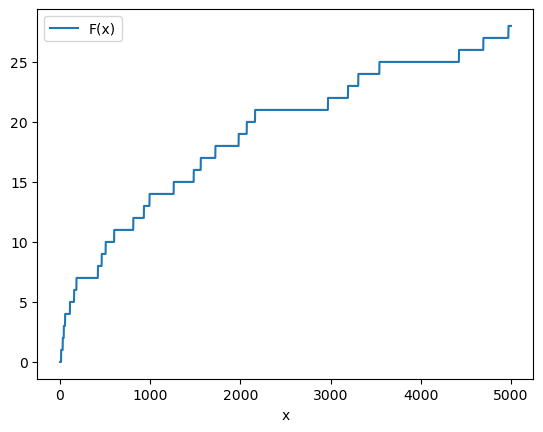}
    \caption{
  }
    \label{fig:grafico9}
  \end{subfigure}
  \caption{$F(x)=$ number of $m\le x$ such that  all minimal triples are of the form $(1,b,c)$, where in (a) $\#\OO(m)=1$ and in (b) $\#\OO(m)>1$ }
  \label{figure6}
\end{figure}

\begin{table}[H]
\begin{tabular}{|>{\centering\arraybackslash}p{0.5cm}|>{\centering\arraybackslash}p{2.2cm}|>{\centering\arraybackslash}p{4.1cm}|>{\centering\arraybackslash}p{6cm}|>{\centering\arraybackslash}p{1.5cm}|}
\hline
   $\bm{m}$ & $\bm{\mathcal{O}_1(m)}$& $\bm{\mathcal{O}_2(m)}$ & $\bm{\mathcal{O}_3(m)}$ & $\bm{\#\mathcal{O}(m)}$                               \\
\hline
   2 & \{(1, 1, 3)\}&&&1\\
\hline
   
   5 & \{(1, 2, 6)\}&&&1\\
   \hline
   6 & \{(1, 1, 4)\}&&&1\\
   \hline
   8 & \{(2, 2, 12)\} &&&1\\
     \hline
  10 & \{(1, 3, 9)\} &&&1\\
    \hline
  12 & & \{(1, 1, 5), (1, 2, 7)\}& &2\\
  \hline
  13 & \{(2, 3, 18)\}&&&1\\
  \hline
  17 & \{(1, 4, 12)\}&&&1\\
  \hline
  18 & \{(3, 3, 27)\}&&&1\\
  \hline
  20 & \{(2, 4, 24)\} & \{(1, 1, 6), (1, 3, 10)\} &  &3\\
    \hline
  21 & & \{(1, 2, 8), (2, 2, 13)\}& &2 \\
  \hline
  25 & \{(3, 4, 36)\}&&&1\\
  \hline
  26 & \{(1, 5, 15)\}&&&1\\
  \hline
  29 & \{(2, 5, 30)\}&&&1\\
  \hline
  30 & & \{(1, 1, 7), (1, 4, 13)\}& &2 \\
  \hline
  32 & \{(4, 4, 48)\} & & \{(1, 3, 11), (2, 3, 19),  (1, 2, 9)\}&4\\ 
  \hline
  34 & \{(3, 5, 45)\}&&&1\\
  \hline
  36 & \{(2, 2, 14)\}&&&1\\
  \hline
  37 & \{(1, 6, 18)\}&&&1\\
  \hline
  40 & \{(2, 6, 36)\}&&&1\\
  \hline
  41 & \{(4, 5, 60)\}&&&1\\
  \hline
  42 & & \{(1, 1, 8), (1, 5, 16)\}& &2\\
  \hline
  45 & \{(3, 6, 54)\}& & \{(1, 2, 10), (1, 4, 14), (2, 4, 25)\}&4\\
  \hline
  46 & & \{(1, 3, 12), (3, 3, 28)\}& &2\\
  \hline
 50 & \{(1, 7, 21), (5, 5, 75)\} & & & 2 \\
\hline
\end{tabular}
\caption{Set of minimal triples for $m\leq 50$ by order $1, 2, 3$}
\label{tab:minimal_tiples}
\end{table}

\begin{table}[H]
\begin{tabular}{|>{\centering\arraybackslash}p{0.7cm}|>{\centering\arraybackslash}p{6cm}|>{\centering\arraybackslash}p{6cm}|}
\hline
   $\bm{m}$ & $\bm{\mathcal{O}(m)\,\,with\,\, \phi=0}$& $\bm{\mathcal{O}(m)\,\,with\,\,\phi\neq 0}$\\
\hline
   2 & \{(1, 1, 3)\}&\\
\hline
   5 & \{(1, 1, 6)\}&\\
\hline
6 & & \{(1, 1, 4)\}\\
\hline
   8 & \{(2, 2, 12)\}&\\
\hline
   13 & \{(2,3,18)\}&\\
\hline
   17 & \{(1,4,12)\}&\\
\hline
   18 & \{(3,3,27)\}&\\
\hline
25 & \{(3,4,36)\}&\\
\hline
26 & \{(1,5,15)\}&\\
\hline
34 & \{(3,5,45)\}&\\
\hline
36 & & \{(2,2,14)\}\\
\hline
37 & \{(1,6,18)\}&\\
\hline
40 & \{(2,6,36)\}&\\
\hline
41 & \{(4,5,60)\}&\\
\hline
52 & \{(4,6,72)\}&\\
\hline
58 & \{(3,7,63)\}&\\
\hline
61 & \{(5,6,90)\}&\\
\hline
68 & \{(2,8,48)\}&\\
\hline
73 & \{(3,8,72)\}&\\
\hline
74 & \{(5,7,105)\}&\\
\hline
82 & \{(1,9,27)\}&\\
\hline
89 & \{(5,8,120)\}&\\
\hline
97 & \{(4,9,108)\}&\\
\hline
98 & \{(7,7,147)\}&\\
\hline
\end{tabular}
\caption{
 $m\leq 100$ with $\OO(m)=1$}
\label{tab:uniqueminimal_tiples1}
\end{table}

\begin{table}[H]
\centering
\begin{tabular}{|>{\centering\arraybackslash}p{1.5cm}|>{\centering\arraybackslash}p{6cm}|}
\hline
$\bm{m}$ & $\bm{\mathcal{O}(m)}$ \\
\hline
6 & \{(1, 1, 4)\}\\
\hline
36 & \{(2,2,14)\}\\
\hline
108 & \{(3,3,30)\}\\
\hline
1176 & \{(7,7,154)\}\\
\hline
61236 & \{(27,27,2214)\}\\
\hline
111078 & \{(33,33,3300)\}\\
\hline
156066 & \{(37,37,4144)\}\\
\hline
405756& \{(51,51,7854)\}\\
\hline
\end{tabular}
\caption{
 $m\leq 405756$ with  $\phi\neq 0$ and $\#\OO(m)=1$ 
}
\label{table3}
\end{table}

\begin{table}[H]
\centering
\begin{tabular}{|>{\centering\arraybackslash}p{1.5cm}|>{\centering\arraybackslash}p{6cm}|}
\hline
$\bm{m}$ & $\bm{\mathcal{O}(m)}$ \\
\hline
5 & \{(1, 2, 6)\}\\
\hline
6 & \{(1, 1, 4)\}\\
\hline
10 & \{(1, 3, 9)\}\\
\hline
12 & \{(1, 1, 5), (1,2,7)\}\\
\hline
17 & \{(1, 4, 12)\}\\
\hline
26 & \{(1,5,15)\}\\
\hline
37 & \{(1,6,18)\}\\
\hline
42 & \{(1,1,8), (1,5,16)\}\\
\hline
56 & \{(1,1,9), (1,6,19)\}\\
\hline
82 & \{(1,9,27)\}\\
\hline
110 & \{(1,1,12), (1,9,28)\}\\
\hline
156 & \{(1,1,14),(1,11,34)\}\\
\hline
182 & \{(1,1,15), (1,12,37)\}\\
\hline
226 & \{(1,15,42)\}\\
\hline
257 & \{(1,16,48)\}\\
\hline
401 & \{(1,20,60)\}\\
\hline
420& \{(1,1,22), (1,19,58)\}\\
\hline
462& \{(1,1,23), (1,20,61)\}\\
\hline
506& \{(1,1,24), (1,21,64)\}\\
\hline
577& \{(1,24,72)\}\\
\hline
600& \{(1,1,26), (1,23,70)\}\\
\hline
812& \{(1,1,30),(1,27,82)\}\\
\hline
930& \{(1,1,32), (1,29,88)\}\\
\hline
992& \{(1,1,33), (1,30,91)\}\\
\hline
\end{tabular}
\caption{
 $m\leq 1000$ such that all minimal triples have first component equal to $1$
}
\label{table4}
\end{table}

\end{document}